\documentclass[a4paper,10pt,twoside]{amsart}
\usepackage[english]{babel}
\usepackage[utf8]{inputenc}
\usepackage[a4paper,inner=2.2cm,outer=2.2cm,top=4cm,bottom=4cm,pdftex]{geometry}
\usepackage{fancyhdr}
\usepackage{color}
\usepackage{bold-extra}
\usepackage{mathrsfs}

\usepackage{comment}
\usepackage{graphics}
\usepackage{aliascnt}
\usepackage[pdftex,citecolor=green,linkcolor=red]{hyperref}
\usepackage{mathtools}
\usepackage{mathdots}

\usepackage{amsmath}
\usepackage{amsfonts}
\usepackage{amssymb}
\usepackage{amsthm}
\usepackage{enumerate}
  
\newtheorem{theorem}{Theorem}[section]
\newtheorem{corollary}[theorem]{Corollary}

\newtheorem{lemma}[theorem]{Lemma}
\newtheorem{prop}[theorem]{Proposition}
\theoremstyle{definition}

\newtheorem{rem}[theorem]{Remark}

\numberwithin{equation}{section}

\newcommand\R{\mathbb{R}}
\newcommand\Z{\mathbb{Z}}
\newcommand\N{\mathbb{N}}
\newcommand\C{\mathbb{C}}
\newcommand\Q{\mathbb{Q}}

\newcommand\A{\mathbb{A}}

\newcommand\eps{\varepsilon}
\renewcommand{\Re}{\textnormal{Re}}
\renewcommand{\Im}{\textnormal{Im}}

\usepackage{tikz}
\usetikzlibrary{calc}
\newlength{\lyxlabelwidth}      % auxiliary length 
	{\settowidth{\lyxlabelwidth}{#2}
		\begin{description}[font=\normalfont,style=sameline,
			leftmargin=\lyxlabelwidth,#1]}
	{\end{description}}

\begin{document}

\title{On the Number of Hecke Eigenvalues of Same Sign on $\mathrm{GL}_n$}

\author{Jesse J\"a\"asaari}
\address{Department of Mathematics and Statistics\\
  P.O. Box 68, 00014\\
  University of Helsinki, Finland}
  \email{jesse.s.jaasaari@helsinki.fi}
  
\subjclass[2020]{Primary 11F30; Secondary 11F12}

\maketitle

\begin{abstract}
We study the distribution of signs of the (real-valued) Hecke eigenvalues $A(m,1,...,1)$ of self-dual Hecke--Maass cusp forms for the group $\mathrm{SL}_n(\Z)$, where $n\geq 2$ is an integer. Our main result establishes, under Generalised Ramanujan--Petersson Conjecture, that for almost all $x\sim X$ the short interval $[x,x+H]$ contains a subset $\mathcal S$ (resp. $\mathcal S')$ of size $\gg H(\log X)^{1/n^2-1}$ such that $A(m,1,...,1)$ is positive (resp. negative) for all $m\in\mathcal S$ (resp. $m\in\mathcal S'$), provided that $(\log X)^{1-1/n^2}\ll H\ll X$. We also prove a slightly stronger result unconditionally for $\mathrm{GL}_2$ and $\mathrm{GL}_3$ Hecke--Maass cusp forms. In addition, we obtain results under weaker bounds towards Generalised Ramanujan--Petersson Conjecture. Finally, as a by-product of our methods, we improve earlier bounds for the number of Hecke eigenvalues of same sign also in long intervals unconditionally for $\mathrm{GL}_2$ and $\mathrm{GL}_3$ Hecke--Maass cusp forms, and under Generalised Ramanujan--Petersson Conjecture for $\mathrm{GL}_n$ forms when $n\geq 4$. 
\end{abstract}

\section{Introduction}

\noindent The distribution of Hecke eigenvalues is a central question in the theory of automorphic forms. During the past few decades studying statistical properties (such as sizes and sign changes) of Hecke eigenvalues of automorphic forms has attracted much attention, see e.g. \cite{Kohnen2007, KLS2008, Wu2009, LLW2010, KLSW2010, Matomaki2012, Matomaki-Radziwill2015}. The interest in signs of these eigenvalues partly stems from their connections to other questions such as those concerning the distribution of zeroes of automorphic forms \cite{Ghosh-Sarnak2012, Lester-Matomaki-Radziwill2018, Jaasaari2026, Jaasaari2026+} and partly them being higher rank analogues of real characters. Let $f$ be a Maass cusp form for the group $\mathrm{SL}_n(\Z)$ with Fourier coefficients $A(m_1,...,m_{n-1})$. Assume further that the form is an eigenfunction of the full Hecke algebra and normalised so that $A(1,...,1)=1$. In this case the Hecke eigenvalue under the $m^{\text{th}}$ Hecke operator is given by $A(m,1,...,1)$.  

In this paper we focus on the signs of Hecke eigenvalues. When $n>2$ the Hecke eigenvalues $A(m,1,...,1)$ are not necessarily real-valued, but this property is satisfied when the underlying form is self-dual. We impose this assumption throughout the paper. For the majority of this work we use the notation $A(m):=A(m,1,...,1)$ for the Hecke eigenvalues\footnote{Sometimes we emphasise the underlying form $f$ by writing $A_f(m)$.}. Rather than studying sign changes, the aim of this work is to study the question that how many Hecke eigenvalues of the same sign a given interval contains? 

To study this we define the counting function
\[ 
\mathcal N_f^\pm(x):=\sum_{\substack{m\leq x\\
A(m)\lessgtr 0}}1,
\]
where "$+$" corresponds to $A(m)>0$ and "$-$" corresponds to $A(m)<0$.  
  
When $n=2$, Lau and Wu \cite{Lau-Wu2009} have shown that the optimal lower bound $\mathcal N_f^\pm(x)\gg x$ holds in the case where $f$ is a holomorphic cusp form. For Maass cusp forms (with  any rank $n\geq 2$) a weaker lower bound $\mathcal N_f^\pm(x)\gg x^{1-\vartheta_n}(\log x)^{2/n-2}$ has been obtained by Liu and Wu \cite{Liu-Wu2015} assuming\footnote{In fact their result is unconditional for $2\leq n\leq 4$.} Hypothesis H of Rudnick and Sarnak \cite{Rudnick-Sarnak1996}. Here $\vartheta_n$ is the exponent towards Generalised Ramanujan--Petersson Conjecture for Hecke--Maass cusp forms on $\mathrm{GL}_n$. The main reason for this weaker result in the setting of (higher rank) Maass cusp forms is that the method of $\mathcal B$-free numbers used in the case of holomorphic cusp forms is difficult to generalise as it uses Serre's estimate \cite{Serre1981} on the size of the set $\{p\leq x\,:\,A(p)=0\}$ as an input, a analogue of this result for (higher rank) Maass cusp forms is not known and is a challenging open problem. Indeed, assuming the non-lacunarity property
\[
\sum_{\substack{p\\A(p)=0}}\frac1p<\infty
\]
(which follows from a generalisation of Serre's result), the $\mathcal B$-free number argument of Lau and Wu \cite{Lau-Wu2009} would give the expected lower bound $\mathcal N_f^{\pm}(x)\gg x$ for arbitrary rank $n$ Hecke--Maass cusp forms. Nevertheless, Liu and Wu \cite{Liu-Wu2015} we able to obtain a non-trivial lower bound by different means. 

It is natural to wonder whether it is also possible to show that shorter intervals contain plenty of both positive and negative Hecke eigenvalues. In the classical situation ($n=2$) Lau and Wu were able obtain such a result for intervals of length constant times $x^{1/2}$. Indeed, they showed \cite[Theorem 2]{Lau-Wu2009} that there exists a constant $C$ depending on $f$ so that for sufficiently large $x$ and any $\eps>0$ we have\footnote{Throughout the paper, when we write $\mathcal N_f^\pm(\cdots)-\mathcal N_f^\pm(\cdots)$ we assume that the choices $\{+,-\}$ in upper indices coincide.}
\[ 
\mathcal N_f^\pm\left(x+Cx^{1/2}\right)-\mathcal N_f^\pm(x)\gg_\eps x^{1/4-\eps}.\]
\noindent As far as the author is aware of, no prior results of this type are known in the higher rank setting. The main goal of the present work is to provide such a result for general Hecke--Maass cusp forms under Generalised Ramanujan--Petersson Conjecture. Let us set
\[
\Delta_f(X):=\prod_{\substack{p\leq X\\A_f(p)=0}}\left(1-\frac1p\right).
\] 
Observe that (see the proof of Theorem \ref{Main_theorem}) under Generalised Ramanujan--Petersson Conjecture we have 
\[
\#\left\{m\sim X:\,A_f(m)\neq 0\right\}\asymp_f X\Delta_f(X).
\]
\begin{rem}
Also note that $\Delta_f(X)$ is the density parameter $\delta(\mathcal N;X)$ appearing in \cite{Matomaki-Radziwill2020} for the multiplicative set\footnote{More generally, we say that a subset $\mathcal N\subset\N$ is \emph{multiplicative} if for any $m,n\geq 1$ with $(m,n)=1$ we have $m,n\in\N$ if and only if $mn\in\N$.} $\mathcal N=\{m\in\N:\,A_f(m)\neq 0\}$.
\end{rem}

\noindent Our first result gives precise information about the number of positive and negative Hecke eigenvalues for Hecke--Maass cusp forms for the group $\mathrm{GL}_n(\R)$ in short intervals $[x,x+H]$ for almost all $x$ in a wide range of $H$. 

\begin{theorem}\label{Main_theorem}
Let $f$ be a Hecke--Maass cusp form for the group $\mathrm{SL}_n(\Z)$. Assume Generalised Ramanujan--Petersson Conjecture and let $1\leq H\leq X/10$ be so that $H\Delta_f(X)\longrightarrow\infty$ as $X\longrightarrow\infty$. Then for all but $o_f(X)$ of $x\sim X$ we have that 
\[
\mathcal N_f^\pm(x+H)-\mathcal N_f^\pm(x)=\frac H{2X}\#\left\{m\sim X:\,A(m)\neq 0\right\}+o_f(H\Delta_f(X)).
\]
In particular, 
\[
\mathcal N_f^\pm(x+H)-\mathcal N_f^\pm(x)\asymp_f H\Delta_f(X)
\]
for almost all $x\sim X$.
\end{theorem}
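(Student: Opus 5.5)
The approach is to write $\mathbf 1_{A(m)>0}=\frac12(|A(m)|^0_{\ne0}+\operatorname{sgn}A(m))$, where $|A(m)|^0_{\ne0}$ denotes the indicator $\mathbf 1_{A(m)\neq0}$. Then
\[
\mathcal N_f^\pm(x+H)-\mathcal N_f^\pm(x)=\frac12\sum_{x<m\le x+H}\mathbf 1_{A(m)\ne 0}\pm\frac12\sum_{x<m\le x+H}\operatorname{sgn}A(m).
\]
Both $g_1(m)=\mathbf 1_{A(m)\neq 0}$ and $g_2(m)=\operatorname{sgn}A(m)$ are multiplicative, bounded by $1$, and supported on the multiplicative set $\mathcal N=\{m:A(m)\ne0\}$. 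Multiplicativity holds because $A(m)$ is multiplicative and real for self-dual $f$. The plan is to apply the Matom\"aki--Radziwi\l\l{} theorem for multiplicative functions supported on sparse multiplicative sets \cite{Matomaki-Radziwill2020} to each. That result gives, for almost all $x\sim X$ and $H\delta(\mathcal N;X)\to\infty$,
\[
\frac1H\sum_{x<m\le x+H}g(m)=\frac1X\sum_{X<m\le 2X}g(m)+o(\delta(\mathcal N;X)),
\]
provided $\mathcal N$ satisfies the hypotheses of that paper. By the Remark, $\delta(\mathcal N;X)=\Delta_f(X)$.

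\textbf{Step 1 (hypotheses on $\mathcal N$).} I need to check that $\mathcal N$ has positive "sifted density" on primes in the required sense. Concretely, I will show that $\{p:A(p)\ne0\}$ has positive lower density in dyadic ranges, uniformly down to small scales, or at least that it satisfies the regularity condition of \cite{Matomaki-Radziwill2020}. Under GRC, $|A(p)|\le n$, so $\sum_{p\le x}|A(p)|^2\sim x/\log x$ (Rankin--Selberg prime number theorem) gives $\#\{p\le x:A(p)\ne0\}\ge x/(n^2\log x)(1+o(1))$. This yields the density $1/n^2$ lower bound. The same bound explains the $(\log X)^{1/n^2-1}$ in the abstract, and also gives $\#\{m\sim X:A(m)\ne0\}\asymp X\Delta_f(X)$ via a standard sieve / Wirsing-type estimate. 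Primes $p$ with $A(p^k)=0$ for some $k\ge2$ and $A(p)\ne0$ need handling, but GRC controls higher prime powers, and these contribute negligibly.

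\textbf{Step 2 (main term from $g_1$).} Applying the theorem to $g_1$ yields $\frac{H}{X}\#\{m\sim X:A(m)\ne0\}+o(H\Delta_f(X))$ for almost all $x$. Halving this gives the stated main term.

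\textbf{Step 3 (sign term).} For $g_2$ I need
\[
\sum_{X<m\le2X}\operatorname{sgn}A(m)=o(X\Delta_f(X)).
\]
This is where the $\asymp$ conclusion comes from, so it is the main obstacle. I would use Hal\'asz's theorem in the form adapted to sparse sets (as in \cite{Matomaki-Radziwill2020}). It suffices to show that $\sum_{p\le X,\,A(p)\ne0}\frac{1-\Re(\operatorname{sgn}A(p)p^{-it})}{p}\to\infty$ uniformly in $|t|\le X$.

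For $t$ not small, this follows since $\mathcal N$ has positive relative density on primes while $p^{-it}$ equidistributes. For $|t|$ small, I need many primes with $A(p)<0$: specifically $\sum_{p\le X,A(p)<0}1/p\to\infty$. I would obtain this from $\sum_{p\le x}A(p)\log p=o(x)$ (the prime number theorem for $L(s,f)$) together with $\sum_{p\le x}A(p)^2\log p\sim x$ and $|A(p)|\le n$. If negative primes were too sparse, then $\sum_p A(p)\log p\ge \frac1n\sum A(p)^2\log p-O(\text{negatives})$, which is a contradiction. This shows negatives have positive lower density among primes.

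Once Step 3 is established, the sign sum in short intervals is $o(H\Delta_f(X))$ for almost all $x$. Combining it with Step 2 gives the asymptotic, and the $\asymp$ statement follows from Step 1.
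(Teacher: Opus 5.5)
Your proposal is correct and follows essentially the same route as the paper: you use the decomposition $\frac12\bigl(\sum 1_{A(m)\neq 0}\pm\sum\operatorname{sgn}A(m)\bigr)$, apply Matom\"aki--Radziwi{\l\l}'s Corollary 1.1 with the prime-density condition obtained from $1_{A(p)\neq0}\geq|A(p)|^2/n^2$ and the Rankin--Selberg prime number theorem, and then a Hal\'asz-type bound for $\sum_{m\sim X}\operatorname{sgn}A(m)$. For that last bound the paper uses Tenenbaum's theorem; non-pretentiousness comes from Matom\"aki--Radziwi{\l\l}'s Lemma 5.1 for $|t|\geq(\log X)^{-1/2}$, and from the positive density of primes with $A(p)<0$ for smaller $|t|$, derived exactly as you do. Your ``equidistribution of $p^{-it}$'' step is what that lemma makes rigorous, and it relies on $\operatorname{sgn}A(p)$ being real, so that $1-\Re(\operatorname{sgn}A(p)p^{-it})\geq 1-|\cos(t\log p)|$ on the support.
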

\noindent It is also easy to show (see the end of Section $5$) that under Generalised Ramanujan--Petersson Conjecture we have $\Delta_f(X)\gg_f (\log X)^{-1+1/n^2}$ and so we have the estimate
\[
\mathcal N_f^\pm(x+H)-\mathcal N_f^\pm(x)\gg_f \frac H{(\log X)^{1-1/n^2}}
\]
for almost all $x\sim X$ in the same range of $H$. 

We are also able to prove slightly sharper results for $\mathrm{GL}_2$ and $\mathrm{GL}_3$ Hecke--Maass cusp forms without assuming Generalised Ramanujan--Petersson Conjecture.
\newpage
\begin{theorem}\label{secondMain_theorem}
\begin{enumerate}
\item Let $f$ be a Hecke--Maass cusp form for the group $\mathrm{SL}_2(\Z)$. Then for $\sqrt{\log X}\ll H\leq X/10$ we have  
\[
\mathcal N_f^\pm(x+H)-\mathcal N_f^\pm(x)\gg_f\frac H{\sqrt{\log X}}
\]
for almost all $x\sim X$.
\item Let $f$ be a Hecke--Maass cusp form for the group $\mathrm{SL}_3(\Z)$. Then for $(\log X)^{2/3}\ll H\leq X/10$ we have  
\[
\mathcal N_f^\pm(x+H)-\mathcal N_f^\pm(x)\gg_f\frac H{(\log X)^{2/3}}
\]
for almost all $x\sim X$.
\end{enumerate}
\end{theorem}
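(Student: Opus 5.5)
We will deduce Theorem \ref{secondMain_theorem} from the Matomäki--Radziwiłł theorem for multiplicative functions in short intervals, in its version for functions vanishing often (\cite{Matomaki-Radziwill2020}). We apply it to
\[
g_\pm(m):=\tfrac12\bigl(\mathbf 1_{A(m)\neq 0}\pm \mathrm{sgn}\,A(m)\bigr)=\mathbf 1_{\pm A(m)>0},
\]
so that $\mathcal N_f^\pm(x+H)-\mathcal N_f^\pm(x)=\sum_{x<m\le x+H} g_\pm(m)$. The two summands $\mathbf 1_{A(m)\neq0}$ and $\mathrm{sgn}\,A(m)$ are real multiplicative functions bounded by $1$, because $A(m)$ is multiplicative. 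The plan therefore has two steps: first show $\mathrm{sgn}\,A$ has mean $o(\text{density})$ in almost all short intervals, and then lower-bound the short sums of $\mathbf 1_{A(m)\neq 0}$.

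\textbf{Step 1: the sign function.} For $\mathrm{sgn}\,A$ we use the Matomäki--Radziwiłł estimate for functions supported on a multiplicative set $\mathcal N$ of density $\delta=\Delta_f(X)$. For almost all $x\sim X$, once $H\delta\to\infty$, it gives
\[
\frac1H\sum_{x<m\le x+H}\mathrm{sgn}\,A(m)=\frac1X\sum_{m\sim X}\mathrm{sgn}\,A(m)+o(\delta).
\]
The long average is $o(X\delta)$, because $\mathrm{sgn}\,A(p)$ is $-1$ on a positive proportion of primes. By Sato--Tate-type or Rankin--Selberg moment arguments, $\sum_{p}(1-\mathrm{sgn}A(p))/p$ weighted appropriately diverges, and a Halász-type bound relative to the support then gives the cancellation. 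In the unconditional $\mathrm{GL}_2$ and $\mathrm{GL}_3$ settings this needs the moment inputs $\sum_{p\le x}|A(p)|^{2},|A(p)|^4\ll x/\log x$, which are available from Rankin--Selberg theory for $\mathrm{GL}_2\times\mathrm{GL}_2$, $\mathrm{Sym}^2$, and $\mathrm{GL}_3\times\mathrm{GL}_3$ (Kim--Shahidi functoriality). These inputs replace the role of the Ramanujan bound in the proof of Theorem \ref{Main_theorem}. The Matomäki--Radziwiłł hypotheses also require control of prime powers $|A(p^k)|$; on average over $p$ this follows from the Kim--Sarnak bound $\vartheta_2\le 7/64$ and the known bound $\vartheta_3<1/2$, since the prime powers contribute a convergent sum.

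\textbf{Step 2: density of nonvanishing.} We need a lower bound of the form $\#\{m\in(x,x+H]:A(m)\ne0\}\gg H(\log X)^{-1/2}$, respectively $H(\log X)^{-1/3}$; this is the main obstacle. By the Matomäki--Radziwiłł result for the indicator, this reduces to the long-interval bound $\Delta_f(X)\gg(\log X)^{-1/2}$, respectively $(\log X)^{-2/3}$. Equivalently, we need
\[
\sum_{p\le X,\,A(p)=0}\frac1p\le c\log\log X+O(1)\quad\text{with } c=\tfrac12 \text{ for } n=2,\ c=\tfrac23 \text{ for } n=3.
\]
We would prove this by comparing the second and fourth moments. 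Cauchy--Schwarz gives
\[
\Bigl(\sum_{p\le X}\frac{|A(p)|^2}{p}\Bigr)^2\le\sum_{p\le X,\,A(p)\ne0}\frac1p\cdot\sum_{p\le X}\frac{|A(p)|^4}{p}.
\]
The second moment sum is $\log\log X$ by Rankin--Selberg. For $\mathrm{GL}_2$ the fourth moment sum is $2\log\log X$, using $|A(p)|^4$ expressed through $\mathrm{Sym}^2$ and $\mathrm{Sym}^4$ (Kim--Shahidi). Hence $\sum_{A(p)\ne0}1/p\ge\frac12\log\log X$, which gives exponent $1/2$. For self-dual $\mathrm{GL}_3$ forms, which are symmetric square lifts, the analogous computation gives fourth moment $3\log\log X$ and exponent $2/3$. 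The delicate points are the fourth-moment asymptotics unconditionally and the final assembly. Combining the two steps gives
\[
\sum_{x<m\le x+H} g_\pm(m)=\tfrac12\#\{m\in(x,x+H]:A(m)\ne0\}+o(H\Delta_f(X))\gg_f H(\log X)^{-c},
\]
valid in the stated ranges of $H$.
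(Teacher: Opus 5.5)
Your plan has the same architecture as the paper: Proposition~\ref{MR} for $\mathbf 1_{A\neq0}$ and $\mathrm{sgn}\,A$, Tenenbaum's relative Hal\'asz bound for the long sum of $\mathrm{sgn}\,A$, and Cauchy--Schwarz between the second and fourth prime moments, giving $\Delta_f(X)\gg(\log X)^{-1+1/C_4}$ with $C_4=2,3$. Step~2 is essentially the paper's argument. The genuine gap is in Step~1: the claim that $A(p)<0$ for a positive proportion of primes is not justified by what you invoke. Sato--Tate is not known for Maass forms, and the second and fourth moments you list cannot see signs ($A(p)\equiv1$ is compatible with both). The indispensable extra input is first-moment cancellation, $\sum_{z<p\le w}A(p)/p=O_f(1/\log z)$, i.e.\ the prime number theorem for $L(s,f)$ (Lemma~\ref{automorphicPNT} with $\pi'$ trivial). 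With it the paper argues in three steps. First, H\"older in the form $(\sum|A(p)|^2/p)^3\le(\sum|A(p)|/p)^2\sum|A(p)|^4/p$ gives $\sum_{z<p\le w}|A(p)|/p\ge C_4^{-1/2}\sum_{z<p\le w}1/p-O(1/\log z)$. Second, since $\sum_{A(p)<0}|A(p)|/p=\frac12\bigl(\sum|A(p)|/p-\sum A(p)/p\bigr)$, the first moment gives the constant $(2\sqrt{C_4})^{-1}$ for the negative part. Third, Cauchy--Schwarz against the second moment yields
\[
\sum_{\substack{z<p\le w\\ A(p)<0}}\frac1p\ge\frac1{4C_4}\sum_{z<p\le w}\frac1p-O_f\Bigl(\frac1{\log z}\Bigr),
\]
and likewise for $A(p)>0$.

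You also need to say how this feeds Tenenbaum's bound. That bound requires $m_g(X;T)\to\infty$ for all $|t|\le T$, not just divergence at $t=0$. The paper uses Lemma~\ref{pretentious} for $(\log X)^{-1/2}\le|t|\le T$, which only needs the density of $A(p)\neq0$. For smaller $|t|$ it uses the negative primes $p\le\exp(\sqrt{\log X}/4)$, where $1+\cos(t\log p)\ge1+\cos(1/4)$.

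Smaller points:
\begin{enumerate}
\item Since $|\mathrm{sgn}\,A|\le1$, no control of $A(p^k)$ is needed anywhere, so the appeal to $\vartheta_2,\vartheta_3$ is superfluous.
\item The Cauchy--Schwarz in Step~2 must be run on every range $(z,w]$, not just $p\le X$, to verify the hypothesis of Proposition~\ref{MR}.
\item Your first ``$(\log X)^{-1/3}$'' should be $(\log X)^{-2/3}$.
\item For $\mathrm{GL}_2$, the identity $A(p)^4=A_{\mathrm{Sym}^2f}(p)^2+2A_{\mathrm{Sym}^2f}(p)+1$ and Rankin--Selberg for $\mathrm{Sym}^2f\times\mathrm{Sym}^2f$ already give $C_4=2$ without $\mathrm{Sym}^4$.
\item For $\mathrm{GL}_3$, the value $C_4=3$ rests on cuspidality of $\mathrm{Sym}^4\sigma$ (Kim--Shahidi) and Hypothesis H for the $\mathrm{GL}_5$ lift (Kim). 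So tetrahedral and octahedral $\sigma$ have to be excluded or treated separately. Note that the crude bound $|A_\Pi(p)|^4\le9|A_\Pi(p)|^2$ the paper uses in that case only gives $C_4=9$, hence $\Delta_f(X)\gg(\log X)^{-8/9}$ rather than $(\log X)^{-2/3}$.
\end{enumerate}
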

\noindent Note that in the $n=2$ case this is much stronger than the result\footnote{Their result was formulated for holomorphic cusp forms, but under Generalised Ramanujan--Petersson Conjecture their method gives an analogous result for $\mathrm{GL}_2$ Hecke--Maass cusp forms.} of Lau and Wu mentioned above, of course with the caveat that our result holds for almost all $x$.

\begin{rem}
By inspecting the proof one observes that if the asymptotics 
\[
\sum_{p\leq x}A(p)^4\log p\sim C_4x
\]
for the fourth moment is known for some absolute constant $C_4>0$, then the conclusions of the previous theorems hold in the range $H\gg (\log X)^{1-1/C_4+o(1)}$. For $\mathrm{GL}_3$ Hecke--Maass cusp forms the self-duality happens to make $C_4=3$ accessible without assuming Generalised Ramanujan--Petersson Conjecture using currently known results towards Langlands functoriality. 
\end{rem}

\noindent As a direct consequence of Theorem \ref{Main_theorem} we deduce the following.  

\begin{corollary}\label{Corollary}
Let $f$ be a Hecke--Maass cusp form for the group $\mathrm{SL}_n(\Z)$. Assume Generalised Ramanujan--Petersson Conjecture. Then, provided that $(\log X)^{1-1/n^2}\ll H\leq X/10$, there are $\gg X/H$ pairwise disjoint subintervals on $[X,2X]$ such that if $x$ belongs to any of these subintervals, we have
\begin{align}\label{WZ-type_result}
\mathcal N_f^\pm(x+H)-\mathcal N_f^\pm(x)\gg_f \frac H{(\log X)^{1-1/n^2}}.
\end{align}
\end{corollary}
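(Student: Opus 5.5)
The plan is to deduce Corollary \ref{Corollary} from Theorem \ref{Main_theorem} together with the lower bound $\Delta_f(X)\gg_f(\log X)^{-1+1/n^2}$ noted after that theorem. This lower bound gives $H\Delta_f(X)\gg H(\log X)^{-1+1/n^2}$.

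\textbf{Step 1: choosing the interval length.} The corollary only assumes $H\gg(\log X)^{1-1/n^2}$, so $H\Delta_f(X)$ is bounded below but need not tend to infinity. We therefore cannot apply Theorem \ref{Main_theorem} to $H$ directly. Instead we apply it with a slightly smaller length $H_0=H/K$, where $K$ is a large constant chosen so that the implied constants work out. If necessary we can also take $K\to\infty$ slowly, say $K=\log\log X$, provided $H_0\Delta_f(X)\to\infty$ still holds. This is the delicate point, and I discuss it in Step 3.

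\textbf{Step 2: from almost all $x$ to many disjoint intervals.} Assume first that $H\Delta_f(X)\to\infty$, which is automatic when $H/(\log X)^{1-1/n^2}\to\infty$. Theorem \ref{Main_theorem} then produces an exceptional set $\mathcal E\subset[X,2X]$ of measure $o(X)$ such that for every $x\notin\mathcal E$ we have $\mathcal N_f^\pm(x+H)-\mathcal N_f^\pm(x)\gg_f H\Delta_f(X)$, and hence \eqref{WZ-type_result}. Partition $[X,2X]$ into $\asymp X/H$ consecutive disjoint intervals $I_j$ of length $H$. In each $I_j$ the good set $I_j\setminus\mathcal E$ is measurable. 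Since $|\mathcal E|=o(X)$, at most $o(X/H)$ of the $I_j$ satisfy $|I_j\cap\mathcal E|\ge |I_j|/2$. Each of the remaining $\gg X/H$ intervals contains a good point, and in fact a set of good points of measure $\ge H/2$.

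To produce honest subintervals rather than just good points, we use monotonicity. $\mathcal N_f^\pm$ is nondecreasing, so if $x_0$ is good for length $H$, then every $x\in[x_0,x_0+H/2]$ satisfies
\[
\mathcal N_f^\pm(x+H)-\mathcal N_f^\pm(x)\ \ge\ \mathcal N_f^\pm(x_0+H)-\mathcal N_f^\pm(x_0+H/2).
\]
The right-hand side is the count on a window of length $H/2$. This suggests applying the theorem with $H/2$ in place of $H$ and then choosing $x_0+H/2$ to be good. Concretely: take $x_0$ such that $x_0+H/2$ is good for length $H/2$. Then for every $x\in[x_0,x_0+H/2]$ the window $[x,x+H]$ contains $[x_0+H/2,x_0+H]$, which gives \eqref{WZ-type_result}. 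Taking $x_0$ from the $\gg X/H$ intervals above, and keeping every other one to ensure disjointness, yields the required subintervals.

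\textbf{Step 3: the boundary case.} The main obstacle is when $H\asymp(\log X)^{1-1/n^2}$ exactly, so that $H\Delta_f(X)$ may be only $\asymp1$. In this case Theorem \ref{Main_theorem} does not apply as stated. My first attempt would be to observe that the hypothesis of the theorem only serves to make the error term $o(H\Delta_f(X))$ meaningful. Alternatively, if $\Delta_f(X)$ is in fact much larger than $(\log X)^{-1+1/n^2}$ (the generic situation), then $H\Delta_f(X)\to\infty$ anyway. Otherwise I would enlarge $H$ by a constant factor and use monotonicity again: an interval of length $H'$ with $H\le H'\le CH$ is covered by $O(C)$ windows of length $H$. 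Failing that, I would interpret ``$\ll$'' in the range condition as requiring $H/(\log X)^{1-1/n^2}$ to be sufficiently large, and argue that the variance bound in the proof of Theorem \ref{Main_theorem} gives an exceptional set of measure at most $\eta X$ with $\eta$ small when $H\Delta_f(X)\ge A$ for a large constant $A$. The disjoint-interval counting in Step 2 only needs the exceptional set to have measure at most $\eta X$ with $\eta$ small, not $o(X)$, so this weaker statement would suffice.
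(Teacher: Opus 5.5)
Your Step 2 is essentially the paper's argument. The paper takes a maximal $H$-separated set $x_1,\dots,x_J$ of good points, so $J\gg X/H$ because the intervals $[x_j-H,x_j+H]$ cover the good set, and uses the disjoint intervals $[x_j-H/4,x_j]$ with the containment $(x_j,x_j+H/2]\subset(y,y+H]$ and Theorem \ref{Main_theorem} at length $H/2$. That is your monotonicity trick, with a separated set where you use a block partition.

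The paper does not address the boundary case $H\Delta_f(X)\asymp 1$ and simply cites Theorem \ref{Main_theorem}. Of your Step 3 options only the last one is an actual argument, and it suffices: fix $\delta$ in Proposition \ref{MR} and take $h_0=H\Delta_f(X)\ge A$, so the exceptional set is at most $\eta X$. Shrinking $H$ by a factor $K\to\infty$, as floated in Step 1, would just cost a factor $K$ in the lower bound.
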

\noindent Furthermore, by Theorem \ref{secondMain_theorem} the assumption of Generalised Ramanujan--Petersson Conjecture may be removed and the lower bound in (\ref{WZ-type_result}) may be slightly increased for $n\in\{2,3\}$. These deductions are made in Section $7$. In the case $n=2$ this significantly improves an earlier result\footnote{Again, technically their result is for holomorphic cusp forms, but the argument can be adapted to the Maass cusp form setting as well.} of Wu and Zhai \cite[Theorems 3 and 4]{Wu-Zhai2009} achieved by a different method. Furthermore, as far as the author is aware of, no prior results of this kind existed in the higher rank setting. 

As a by-product of these arguments we are able to produce lower bounds for the quantity $\mathcal N_f^\pm(x)$ for any $x$.

\begin{theorem}\label{thirdMain_theorem}
Let $f$ be a Hecke--Maass cusp form for the group $\mathrm{SL}_n(\Z)$. Assume Generalised Ramanujan--Petersson Conjecture. Then we have  
\[
\mathcal N_f^\pm(x)\gg\frac x{(\log x)^{1-1/n^2}}
\]
for sufficiently large $x$.
\end{theorem}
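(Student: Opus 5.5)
The plan is to deduce Theorem \ref{thirdMain_theorem} from Theorem \ref{Main_theorem} by tiling dyadic ranges with short intervals, combined with the lower bound $\Delta_f(X)\gg_f(\log X)^{-1+1/n^2}$ from the end of Section 5. Fix $x$ large and put $X=x/4$. Then $[X,2X]\subset[1,x]$, so it suffices to show
\[
\mathcal N_f^\pm(2X)-\mathcal N_f^\pm(X)\gg_f X\Delta_f(X).
\]

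Choose $H$ with $H\Delta_f(X)\to\infty$, for instance $H=\lfloor X/10\rfloor$ or any $H\ge (\log X)^{1-1/n^2}\cdot\log\log X$. Let $\mathcal E\subset[X,2X]$ be the exceptional set of Theorem \ref{Main_theorem}, so $|\mathcal E|=o(X)$. Off $\mathcal E$ we have
\[
\mathcal N_f^\pm(y+H)-\mathcal N_f^\pm(y)\ge c_fH\Delta_f(X).
\]
Integrate this over $y\in[X,2X-H]\setminus\mathcal E$. The left side counts each $m$ with the given sign at most $H$ times, because
\[
\int_X^{2X-H}\bigl(\mathcal N_f^\pm(y+H)-\mathcal N_f^\pm(y)\bigr)\,dy\le H\bigl(\mathcal N_f^\pm(2X)-\mathcal N_f^\pm(X)\bigr).
\]
Since the integrand is nonnegative, we may drop $\mathcal E$ from the domain. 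The restricted domain has measure $\ge X-H-o(X)\gg X$. Hence $H(\mathcal N_f^\pm(2X)-\mathcal N_f^\pm(X))\gg_f XH\Delta_f(X)$, and cancelling $H$ gives the claim. If one prefers discrete counting, an equivalent route is to pick $\gg X/H$ disjoint intervals $[y_j,y_j+H]$ with $y_j\notin\mathcal E$, as in Corollary \ref{Corollary}, and sum.

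Finally, apply $\Delta_f(X)\gg_f(\log X)^{-1+1/n^2}$, which the paper states follows under Generalised Ramanujan--Petersson Conjecture. Since $\log X\asymp\log x$, this gives $\mathcal N_f^\pm(x)\gg_f x(\log x)^{1/n^2-1}$.

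The only substantive input is the lower bound for $\Delta_f$. If it had to be proved here, the approach would be as follows. Under Generalised Ramanujan--Petersson Conjecture, $|A(p)|\le n$. Rankin--Selberg gives $\sum_{p\le X}|A(p)|^2/p=\log\log X+O(1)$. Hence the density $\delta$ of primes with $A(p)\ne0$ satisfies $\delta n^2\ge 1$ in the logarithmic sense, that is,
\[
\sum_{p\le X,\,A(p)=0}\frac1p\le\Bigl(1-\frac1{n^2}\Bigr)\log\log X+O(1).
\]
Exponentiating this bound gives the required estimate for $\Delta_f$. The main point to watch is that the $o_f(X)$ exceptional set of Theorem \ref{Main_theorem} is uniform enough for a single fixed choice of $H$ at each scale. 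This is immediate, because we apply the theorem once per $X$, and the error term $o_f(H\Delta_f(X))$ is dominated by the main term $\asymp H\Delta_f(X)$.
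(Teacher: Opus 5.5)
Your deduction is correct, but it takes a different route from the paper. You average the almost-all short-interval statement of Theorem~\ref{Main_theorem}, with $H=\lfloor X/10\rfloor$, over $y\in[X,2X-H]$ to get $\mathcal N_f^\pm(2X)-\mathcal N_f^\pm(X)\gg_f X\Delta_f(X)$. You then use $\Delta_f(X)\gg_f(\log X)^{-1+1/n^2}$.

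The paper argues directly and almost elementarily:
\begin{enumerate}
\item It fixes $r$ with $A_f(r)<0$, which exists by Qu.
\item It applies Wirsing's theorem to the minorant $h(p)=|A_f(p)|^2/n^2$, taken for $p$ coprime to $r$ and supported on squarefree integers. This gives $\gg x(\log x)^{1/n^2-1}$ squarefree $m$, coprime to $r$, built from primes with $A_f(p)\neq 0$.
\item At least half of those $m\le x/r$ share a sign. Since $A_f(mr)=A_f(m)A_f(r)$, multiplying them by $r$ produces equally many of the opposite sign in $[1,x]$.
\end{enumerate}
That route needs only the Rankin--Selberg asymptotic at primes, Wirsing's theorem and a single negative coefficient. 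Using $h$ rather than $1_{A_f(p)\neq 0}$ is what makes Wirsing applicable: $h$ has prime density exactly $1/n^2$, whereas no density for $\{p: A_f(p)\neq 0\}$ is known to exist.

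Your route imports everything behind Theorem~\ref{Main_theorem}: Matom\"aki--Radziwi{\l\l}, Tenenbaum's mean-value theorem, and the lower bounds for primes with $A_f(p)>0$ and with $A_f(p)<0$ that feed the pretentious-distance estimate. In return it gives more, namely equidistribution of signs among the nonzero coefficients and the bound $\gg X\Delta_f(X)$, which is at least as strong as the one stated.

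The short-interval layer is superfluous for this purpose, though. The two dyadic estimates proved inside Theorem~\ref{Main_theorem} are $\sum_{m\sim X}r(m)\asymp_f X\Delta_f(X)$ and $\sum_{m\sim X}g(m)=o_f(X\Delta_f(X))$. They already give $\mathcal N_f^\pm(2X)-\mathcal N_f^\pm(X)=\frac12\sum_{X<m\le 2X}(r(m)\pm g(m))\gg_f X\Delta_f(X)$ directly, so your integration over $y$ only reconstructs this.
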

\noindent The result of Liu and Wu \cite{Liu-Wu2015} implies that $\mathcal N_f^\pm(x)\gg x(\log x)^{2/n-2}$ under Generalised Ramanujan--Petersson Conjecture and so Theorem \ref{thirdMain_theorem} improves their result by a logarithmic factor. 

We can also improve the lower bound for the quantity $\mathcal N_f^\pm(x)$ without assuming Generalised Ramanujan--Petersson Conjecture for $\mathrm{GL}_2$ and $\mathrm{GL}_3$ forms. 

\begin{theorem}\label{fourthMain_theorem}
\begin{enumerate}
\item Let $f$ be a Hecke--Maass cusp form for the group $\mathrm{SL}_2(\Z)$. Then we have  
\[
\mathcal N_f^\pm(x)\gg_f\frac x{\sqrt{\log x}}
\]
for sufficiently large $x$.

\item Let $f$ be a Hecke--Maass cusp form for the group $\mathrm{SL}_3(\Z)$. Then we have  
\[
\mathcal N_f^\pm(x)\gg_f\frac x{(\log x)^{2/3}}
\]
for sufficiently large $x$. 
\end{enumerate}
\end{theorem}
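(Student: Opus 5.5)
The plan is to deduce Theorem \ref{fourthMain_theorem} directly from Theorem \ref{secondMain_theorem}, specialised to the longest admissible interval length. Since $\mathcal N_f^\pm$ is nondecreasing, a lower bound for $\mathcal N_f^\pm(x+H)-\mathcal N_f^\pm(x)$ at a \emph{single} point $x$ with $x+H\leq y$ already gives a lower bound for $\mathcal N_f^\pm(y)$. The "almost all $x$" qualifier is therefore harmless, because we only need one good $x$.

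Concretely, given a large $y$, set $X:=y/3$ and $H:=X/10$. For large $X$ this $H$ lies in the ranges $\sqrt{\log X}\ll H\leq X/10$ and $(\log X)^{2/3}\ll H\leq X/10$ required in parts (1) and (2) of Theorem \ref{secondMain_theorem}. That theorem says the exceptional set of $x\sim X$ has measure $o(X)$, so for $X$ large there is some $x\in[X,2X]$ outside it. For this $x$ we get, in the $\mathrm{SL}_2(\Z)$ case,
\[
\mathcal N_f^\pm(x+H)-\mathcal N_f^\pm(x)\gg_f\frac{H}{\sqrt{\log X}}\gg_f\frac{y}{\sqrt{\log y}},
\]
and in the $\mathrm{SL}_3(\Z)$ case the same bound with $(\log X)^{2/3}$ in place of $\sqrt{\log X}$. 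We have $x+H\leq 2X+X/10<3X=y$, and $\mathcal N_f^\pm(x)\geq 0$. Monotonicity then gives $\mathcal N_f^\pm(y)\geq \mathcal N_f^\pm(x+H)-\mathcal N_f^\pm(x)$, which is the claim with $x$ renamed to $y$. The same argument, applied to Theorem \ref{Main_theorem} together with $\Delta_f(X)\gg_f(\log X)^{-1+1/n^2}$, also yields Theorem \ref{thirdMain_theorem}.

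There is no real obstacle in this deduction. The only points to check are that the implied constants in Theorem \ref{secondMain_theorem} do not depend on $H$ within the stated range, so that the choice $H=X/10$ is legitimate, and that "almost all" means the exceptional set has measure $o(X)$, so it is not all of $[X,2X]$ for large $X$. Both hold as the theorem is stated.

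If one wanted a self-contained proof instead, one would run the long-interval version of the argument behind Theorem \ref{secondMain_theorem}. The steps would be: show that $\sum_{m\leq x}|A(m)|\gg x/(\log x)^{\kappa}$, with $\kappa=1/2$ for $n=2$ and $\kappa=2/3$ for $n=3$, using the available second and fourth moment asymptotics of $A(p)$ (the latter via Rankin--Selberg and functoriality, as in the Remark above); combine this with $\sum_{m\leq x}A(m)=o(x/(\log x)^{\kappa})$, which follows from the Hecke $L$-function having no pole at $s=1$; and finally pass from the $\ell^1$ mass to counts using a bound on $\sum_{m\leq x} A(m)^2$ restricted appropriately, which is the hardest step without Ramanujan. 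The deduction above avoids all of this.
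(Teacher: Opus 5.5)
Your deduction is correct. Theorem \ref{secondMain_theorem} allows $H=X/10$ with constants depending only on $f$, and the exceptional set is $o(X)$, so a good $x\in[X,2X]$ exists for $X=y/3$. Since $\mathcal N_f^\pm$ is nonnegative and nondecreasing and $x+H<y$, the claim follows. There is no circularity, because Section 7 never uses Theorem \ref{fourthMain_theorem}.

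This is, however, a genuinely different route from the paper's. At $H\asymp X$ your argument really rests on the dyadic content of Theorem \ref{secondMain_theorem}. That content has two parts:
\begin{enumerate}
\item the density estimate $\sum_{m\sim X}r(m)\asymp X\Delta_f(X)$;
\item the cancellation $\sum_{m\sim X}\text{sgn}(A_f(m))=o(X\Delta_f(X))$, which needs \eqref{cond2}, the resulting lower density of primes with $A_f(p)<0$, the bound $m_g(X;T)\gg\log\log X$ via Lemma \ref{pretentious}, and Tenenbaum's Proposition \ref{Tenenbaum}.
\end{enumerate}
In return you get more than the statement: the nonzero eigenvalues in $[X,2X]$ split asymptotically evenly between the two signs.

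The paper avoids any cancellation in the sign function. It uses only the density bound $\#\{m\sim X:\,A_f(m)\neq 0,\,(m,r)=1\}\gg X\Delta_f(X)$, obtained from \eqref{cond4} and Matom\"aki--Radziwi{\l\l}'s Corollary 1.2(i). Condition \eqref{cond4} itself comes from \eqref{cond1} and \eqref{cond3} by Cauchy--Schwarz. It then takes a single $r$ with $A_f(r)<0$ from Qu's sign-change theorem. By multiplicativity, the injection $m\mapsto mr$ on $m\le X/r$ coprime to $r$ flips signs.

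That argument is more elementary and needs fewer prime-level inputs: no \eqref{cond2} and no non-pretentiousness. So it applies whenever a non-vanishing density estimate is available, but it gives only a lower bound, not equidistribution of signs. Both routes ultimately rest on the same Section 7 bound $\Delta_f(X)\gg(\log X)^{\alpha-1}$, with $\alpha=1/2$ for $n=2$ and $\alpha=1/3$ for $n=3$.

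Your closing sketch of a ``self-contained'' alternative is not needed, and as written it would not reach the stated exponents. Passing from $\sum_{m\le x}|A(m)|\gg x(\log x)^{-\kappa}$ to a count via Cauchy--Schwarz against $\sum_{m\le x}|A(m)|^2\asymp x$ only gives $x(\log x)^{-2\kappa}$, so you should simply drop it.
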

\noindent The first part slightly improves the bound $\mathcal N_f^\pm(x)\gg_f x/\log x$, which follows from \cite[Theorem 1.2]{Matomaki-Radziwill2015}. The second part significantly improves an earlier results of Liu and Wu \cite{Liu-Wu2015} who showed $\mathcal N_f^\pm(x)\gg_f x^{2/7}(\log x)^{-4/3}$ for $\mathrm{GL}_3$ Maass cusp forms. Observe also that for a holomorphic cusp form $f$, Lau, Liu, and Wu \cite{LLW2010} have shown that $\mathcal N_{\text{Sym}^2f}^\pm(x)\gg_f x$. However, the method that achieves this is based on $\mathcal B$-free numbers and thus relies on Deligne's bound for the Hecke eigenvalues that is not presently available for Maass cusp forms.

\begin{rem}\label{Improvement}
\begin{enumerate}
\item If one considers Maass cusp forms of higher level, then in the $\mathrm{GL}_2$ case such a form may correspond to a representation of dihedral type. For these forms one can show a slightly weaker bound 
\[
\mathcal N_f^\pm(x)\gg_f\frac x{(\log x)^{3/4}}.
\]
\item In the $\mathrm{GL}_3$ case one can in turn prove a slightly improved bound
\[
\mathcal N_f^\pm(x)\gg_f\frac x{(\log x)^{1/3}}.
\]
if the form $f$ is a Gelbart--Jacquet lift of a $\mathrm{GL}_2$ Maass cusp form that is of octahedral type. 
\end{enumerate}
\end{rem}

\noindent We also apply another method based on the moments of sums of Hecke eigenvalues to obtain results that hold under a weak Generalised Ramanujan--Petersson Conjecture, but on the other hand simultaneously assuming Generalised Lindel\"of Hypothesis for the standard Godement--Jacquet $L$-function and the Rankin--Selberg $L$-function. 

\begin{theorem}\label{fifthMain_theorem}
Let $f$ be a Hecke--Maass cusp form for the group $\mathrm{SL}_n(\Z)$. Assume the weak Ramanujan--Petersson Conjecture in the form $|A(m)|\ll_\eps m^{\vartheta+\eps}$ . Furthermore, assume Generalised Lindel\"of Hypothesis for $L(s,f)$ and the Rankin--Selberg $L$-function $L(s,f\times f)$.

(1) Let $1\leq H\ll X$. Then there exists a subset $\mathcal E(X,H)$ with $|\mathcal E(X,H)|\ll_\eps X^{1+2\vartheta+\eps}/H$ so that for any $x\in[X,2X]\setminus\mathcal E(X,H)$ we have  
\[
\mathcal N_f^\pm(x+H)-\mathcal N_f^\pm(x)\gg_\eps HX^{-2\vartheta-\eps}. 
\]
In particular, for $H\geq X^{2\vartheta+\eps}$ the exceptional set has size $o(X)$. 

(2) For $H\geq X^{1/2+\vartheta+\eta}$, with an arbitrarily small fixed $\eta>0$, we have  
\[
\mathcal N_f^\pm(x+H)-\mathcal N_f^\pm(x)\gg_\eps HX^{-2\vartheta-\eps}
\]
for any $x\sim X$. 
\end{theorem}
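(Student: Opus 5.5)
The idea is to compare the first and second moments of $A(m)$ over the short interval $(x,x+H]$ and convert them into a lower bound for how often $A(m)$ takes each sign. For real $a_m$ we have $a_m^{\pm}=\frac12(|a_m|\pm a_m)$ and $|a_m|\ge a_m^2/\max_{m\sim X}|a_m|$. Hence
\[
\sum_{\substack{x<m\le x+H\\ A(m)\gtrless 0}}|A(m)|\;\ge\;\frac12\Bigl(\sum_{x<m\le x+H}\frac{A(m)^2}{X^{\vartheta+\eps}}\;-\;\Bigl|\sum_{x<m\le x+H}A(m)\Bigr|\Bigr).
\]
The left-hand side is at most $X^{\vartheta+\eps}\bigl(\mathcal N_f^\pm(x+H)-\mathcal N_f^\pm(x)\bigr)$. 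So it suffices to show two things: that $S_2(x,H):=\sum_{x<m\le x+H}A(m)^2\gg H$, and that $S_1(x,H):=\sum_{x<m\le x+H}A(m)=o(HX^{-\vartheta-\eps})$, with the appropriate uniformity in $x$. Dividing by $X^{\vartheta+\eps}$ once more then gives the lower bound $HX^{-2\vartheta-2\eps}$.

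\textbf{Second moment.} Write $S_2(x,H)$ via Perron's formula applied to the Rankin--Selberg $L$-function $L(s,f\times f)=\zeta(2s)\sum_{m}A(m,1,\dots,1)\ldots$. More simply, use the Dirichlet series $\sum_m |A(m)|^2m^{-s}$, which is $L(s,f\times\tilde f)$ times a factor holomorphic and bounded in $\Re s>1/2$. This series has a simple pole at $s=1$ with positive residue $c_f$. Shift the contour to $\Re s=1/2+\eps$ and bound the remaining integral with the Generalised Lindel\"of Hypothesis for $L(s,f\times f)$, truncating at height $T$. This gives
\[
\sum_{m\le x}A(m)^2=c_fx+O_\eps\bigl(x^{1/2+\eps}\bigr),
\]
where the truncation error is handled by taking $T=X$ and using $|A(m)|\ll m^{\vartheta+\eps}$ near the endpoints. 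Differencing yields $S_2(x,H)=c_fH+O(X^{1/2+\eps})$ for every $x$. This is enough for part (2), since there $H\ge X^{1/2+\vartheta+\eta}$. For part (1), with small $H$, I would instead use the mean-square version: by the Cauchy--Schwarz inequality together with a Parseval/Gallagher-type bound for the error term $E_2(x)=\sum_{m\le x}A(m)^2-c_fx$ in short intervals, $S_2(x,H)\ge c_fH/2$ outside a set of measure $\ll X^{1+\eps}/H$. If even this is too weak for very small $H$, the trivial bound $|E_2(x+H)-E_2(x)|\ll HX^{2\vartheta+\eps}$ can be absorbed into the size of the exceptional set that the statement allows.

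\textbf{First moment (main obstacle).} I would show that $S_1(x,H)$ is small on average. By Gallagher's lemma,
\[
\int_X^{2X}|S_1(x,H)|^2\,dx\ll H^2\int_{|t|\le X/H}\bigl|L(\tfrac12+\eps+it,f)\bigr|^2X^{1+2\eps}\,dt+(\text{tails}).
\]
Under the Generalised Lindel\"of Hypothesis for $L(s,f)$ this is $\ll_\eps HX^{2+\eps}$. Estimating the tails uses $|A(m)|\ll m^{\vartheta+\eps}$, which contributes an extra factor of at most $X^{2\vartheta}$, so the total is $\ll HX^{2+2\vartheta+\eps}$. Chebyshev's inequality then shows that $|S_1(x,H)|>HX^{-\vartheta-2\eps}$ only on a set of measure
\[
\ll\frac{HX^{2+2\vartheta+\eps}}{H^2X^{-2\vartheta}}.
\]
This has to be matched against the claimed bound $X^{1+2\vartheta+\eps}/H$, and the normalisation (dividing $x$-integrals by $X$) has to be tracked carefully. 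I expect this bookkeeping to be the most delicate step: choosing the weight threshold so that the exceptional-set size is exactly $X^{1+2\vartheta+\eps}/H$. Concretely, I would apply the first-moment bound with threshold $\tfrac14 c_fHX^{-\vartheta-\eps}$ and check that the variance bound $\int|S_1|^2\ll HX^{1+\eps}$ (obtained after correct normalisation under Lindel\"of) yields exactly this.

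For part (2), a pointwise bound is available instead. Perron's formula with a contour shift to $\Re s=1/2+\eps$ under Lindel\"of gives $S_1(x,H)\ll X^{1/2+\vartheta+\eps}$, where the factor $X^{\vartheta}$ comes from the truncation error. This is $o(HX^{-\vartheta-\eps})$ only when $H\gg X^{1/2+2\vartheta}$. Because of this gap I would first try to refine the truncation so that it loses only $X^{\eps}$, by smoothing the sharp cutoff with a weight of width $X^{1/2}$. The resulting boundary terms are bounded by $X^{1/2}\cdot X^{\vartheta}\ll HX^{-\eta}$, which is exactly the reason for the hypothesis $H\ge X^{1/2+\vartheta+\eta}$. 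Combining the two moment estimates in the inequality from the first paragraph then gives both claims.
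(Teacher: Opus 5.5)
Your architecture matches the paper's: obtain $S_2(x,H)\gg H$, deduce $\sum_{x<m\le x+H}|A(m)|\gg HX^{-\vartheta-\eps}$ from $|A(m)|\ll X^{\vartheta+\eps}$, show $|S_1(x,H)|$ is at most a small constant times $HX^{-\vartheta-\eps}$, and divide by $X^{\vartheta+\eps}$ once more. In part (1) the exceptional sets come from Chebyshev's inequality. The gap is in the first-moment estimates: there you cannot lose any power $X^{\vartheta}$, and your proposal loses one.

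In part (2) you need $S_1(x,H)=o(HX^{-\vartheta-\eps})$. At $H=X^{1/2+\vartheta+\eta}$ this means essentially $S_1(x,H)\ll X^{1/2+\eps}$. You rightly note that $X^{1/2+\vartheta+\eps}$ is too weak, but your repair reproduces that same bound. Smoothing with a weight of width $X^{1/2}$ leaves boundary terms of size $X^{1/2}\cdot X^{\vartheta+\eps}$. Your check $X^{1/2+\vartheta}\ll HX^{-\eta}$ only gives $S_1=o(H)$, not $S_1=o(HX^{-\vartheta-\eps})$. As written, part (2) follows only for $H\ge X^{1/2+2\vartheta+\eta}$.

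The loss is not intrinsic. Under Lindel\"of, the integral over $\Re s=1/2+\eps$ up to height $T$ costs only $X^{1/2+\eps}T^{\eps}$. So in truncated Perron you may take $T=X$, and the truncation error is then $\ll X^{\vartheta+\eps}\le X^{1/2}$. Equivalently, smooth at width $W\le X^{1/2-\vartheta}$ instead of $X^{1/2}$. The paper smooths at width $1$: with $R(x)=\sum_{m\le x}A(m)(x-m)$ one has $\sum_{m\le x}A(m)=R(x+1)-R(x)+O(x^{\vartheta+\eps})$. The kernel $K_x(s)=((x+1)^{s+1}-x^{s+1})/(s(s+1))\ll\min\{x^{\sigma}/(1+|t|),x^{\sigma+1}/(1+|t|^2)\}$ on $\Re s=1/2+\eps$ then yields $\sum_{m\le x}A(m)\ll x^{1/2+\eps}$.

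The same caveat applies to part (1). Your Gallagher computation is off by a factor $X$. Moreover, if the tails really cost $X^{2\vartheta}$, the correctly normalised variance would be $HX^{1+2\vartheta+\eps}$. Chebyshev at threshold $HX^{-\vartheta-\eps}$ would then give an exceptional set of size $X^{1+4\vartheta+\eps}/H$ rather than $X^{1+2\vartheta+\eps}/H$. You do state the right target $\int_X^{2X}|S_1(x,H)|^2\,\mathrm dx\ll HX^{1+\eps}$, but it has to be proved without truncation loss. The paper does this in three steps:
\begin{itemize}
\item it uses the Saffari--Vaughan inequality to pass to sums over $[x,(1+\beta)x]$ with $\beta\le 8H/X$;
\item it writes these sums exactly as integrals over $\Re s=1/2+\eps$ with kernel $((1+\beta)^s-1)/s\ll\min\{\beta,1/(1+|t|)\}$;
\item it applies Plancherel, and Lindel\"of gives $X^{2+\eps}\beta^{1-\eps}$, hence $HX^{1+\eps}$ after integrating over $\beta$.
\end{itemize}

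Finally, the factor $H_f(s)$ relating $\sum_m|A(m)|^2m^{-s}$ to $L(s,f\times\widetilde f)$ is only known to be bounded for $\Re s>1/2+\vartheta$, not $\Re s>1/2$. Your second-moment errors should therefore be $X^{1/2+\vartheta+\eps}$ pointwise and $HX^{1+2\vartheta+\eps}$ in mean square. These still suffice for both parts, and the mean-square bound is precisely the source of the $X^{1+2\vartheta+\eps}/H$ in the statement.
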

\noindent Finally, assuming  the full Ramanujan--Petersson Conjecture we are able to sharpen the factor $X^{-\eps}$ to a logarithmic factor in part (2) of the previous theorem. 

\begin{theorem}\label{sixthMain_theorem}
Let $f$ be a Hecke--Maass cusp form for the group $\mathrm{SL}_n(\Z)$. Suppose that $x^{1/2+\eps}\leq H\ll x$. Assume Generalised Ramanujan--Petersson Conjecture and Generalised Lindel\"of Hypothesis for both $L(s,f)$ and $L(s,f\times f)$. Then we have  
\[
\mathcal N_f^\pm(x+H)-\mathcal N_f^\pm(x)\gg_\eps\frac H{(\log x)^{2\log n+\eps}}.
\]
\end{theorem}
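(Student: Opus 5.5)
We work with the positive and negative parts of $A(m)$ in the interval $(x,x+H]$. Since $|A(m)|\le d_n(m)$ under Generalised Ramanujan--Petersson Conjecture, we have $|A(m)|\le d_n(m)\ll_\eps x^{\eps}$, and
\[
\sum_{\substack{x<m\le x+H\\ A(m)\gtrless 0}}|A(m)| = \frac12\Big(\sum_{x<m\le x+H}|A(m)| \pm \sum_{x<m\le x+H}A(m)\Big).
\]
This reduces the theorem to two estimates: an upper bound $\sum_{x<m\le x+H}A(m)\ll H^{1-\delta}$-type saving, say $\ll_\eps x^{\frac{n-1}{n+1}+\eps}$ or similar, and a lower bound $\sum_{x<m\le x+H}|A(m)|\gg H/(\log x)^{c}$ for a suitable $c$. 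We then pass from the weighted count back to $\mathcal N_f^\pm$ by Cauchy--Schwarz:
\[
\Big(\sum_{\substack{x<m\le x+H\\ A(m)\gtrless0}}|A(m)|\Big)^2\le \big(\mathcal N_f^\pm(x+H)-\mathcal N_f^\pm(x)\big)\sum_{x<m\le x+H}|A(m)|^2 .
\]

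For the sum of $A(m)$ we use Perron's formula for $L(s,f)$ on the line $\Re s=1/2$ under the Generalised Lindel\"of Hypothesis, which gives $\sum_{m\le x}A(m)\ll x^{1/2+\eps}$. For $H\ge x^{1/2+\eps}$ the linear term is therefore negligible. To get a power saving rather than only $\eps$, we truncate Perron at height $T=x^{1+\eps}/H$ and use the Ramanujan bound to control the truncation error. Similarly, the Rankin--Selberg $L$-function $L(s,f\times f)$, which has a simple pole at $s=1$, together with Generalised Lindel\"of Hypothesis gives
\[
\sum_{x<m\le x+H}|A(m)|^2=c_fH+O(x^{1/2+\eps})\asymp H.
\]

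The main step, and the main obstacle, is the lower bound for the first absolute moment in a short interval with only a logarithmic loss. Here we use Hölder's inequality in the form
\[
\sum|A|^2\le\Big(\sum|A|\Big)^{2/3}\Big(\sum|A|^4\Big)^{1/3},
\]
so that $\sum|A|\ge(\sum|A|^2)^{3/2}(\sum|A|^4)^{-1/2}$. It then suffices to obtain an upper bound for $\sum_{x<m\le x+H}|A(m)|^4$ of size $H(\log x)^{\kappa}$. We intend to do this with a Shiu-type Brun--Titchmarsh bound for multiplicative functions in short intervals; this is valid for $H\ge x^{\eps}$ and so covers our range, and under Generalised Ramanujan--Petersson Conjecture the function $|A(m)|^4\le d_n(m)^4$ is admissible. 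Shiu's bound gives
\[
\sum_{x<m\le x+H}|A(m)|^4\ll \frac{H}{\log x}\exp\Big(\sum_{p\le x}\frac{|A(p)|^4}{p}\Big).
\]
We have not fully determined the exponent: the trivial estimate $|A(p)|\le n$ gives too much, so we would instead use the bound $\sum_{p\le x}|A(p)|^4/p\le (\text{const})\log\log x$ coming from the Rankin--Selberg theory available for $f$. The crude estimate $|A(p)|^4\le n^2|A(p)|^2$ gives $n^2\log\log x$, which yields $\sum|A|\gg H(\log x)^{-(n^2-1)/2}$ and hence a count $\gg H(\log x)^{-(n^2-1)}$. That is weaker than the stated exponent $2\log n+\eps$, so the stated bound needs a sharper handle on the fourth moment at primes, or a different choice of Hölder exponents. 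We would therefore optimise the exponents: apply Hölder with $|A|^{q}$ for $q$ close to $2$, and bound $\sum_p|A(p)|^{q}/p$ using $|A(p)|\le n$ interpolated against $\sum_p|A(p)|^2/p\sim\log\log x$. We expect that the $\log n$ constant, $2\log n$, arises from this limiting process as $q\to2$, which would also account for the $\eps$.

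Combining the three estimates, we have $\sum_{A>0}|A|,\ \sum_{A<0}|A|\gg H(\log x)^{-c}$. The Cauchy--Schwarz inequality above, together with the $O(H)$ bound for the second moment, then gives the stated lower bound for $\mathcal N_f^\pm(x+H)-\mathcal N_f^\pm(x)$.
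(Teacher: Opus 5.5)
Your proposal is correct and, once you settle on H\"older with exponent $q\to 2^+$, it is exactly the paper's argument. Shiu's bound combined with $|A(p)|^q\le n^{q-2}|A(p)|^2$ and the Rankin--Selberg estimate gives $\sum_{x\le m\le x+H}|A(m)|^q\ll H(\log x)^{n^{q-2}-1}$; interpolation then yields $\sum_{x\le m\le x+H}|A(m)|\gg H(\log x)^{-(n^{q-2}-1)/(q-2)}$ with $(n^{q-2}-1)/(q-2)\to\log n$, and Cauchy--Schwarz against $\sum|A(m)|^2\asymp H$ doubles the exponent to $2\log n+\eps$, as you expected (the power-saving Perron truncation you mention is unnecessary, since the Lindel\"of bound $x^{1/2+\eps'}$ with $\eps'<\eps$ already makes $\sum A(m)$ negligible for $H\ge x^{1/2+\eps}$).
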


\section{The strategy}

\noindent We make a few remarks concerning the proofs. For Theorem \ref{Main_theorem} our methods largely build upon the work of Matom\"aki and Radziwi{\l\l} \cite{Matomaki-Radziwill2020} concerning multiplicative functions in short intervals. We consider the multiplicative functions $g(m)=\text{sgn}(A(m))$ and $r(m)=|g(m)|$, and apply \cite[Corollary 1.1]{Matomaki-Radziwill2020} to reduce the proof to understanding averages of $g(m)$ and $r(m)$ over dyadic intervals. At this stage we are able to use some arguments inspired by the work of Mangerel \cite{Mangerel2023b}. Much of the work involves verifying conditions needed to apply certain results from the theory of multiplicative functions. This turns out not to be entirely straightforward. 

To remove Generalised Ramanujan--Petersson Conjecture when $n\in\{2,3\}$ (Theorem \ref{secondMain_theorem}) the key observation is that such an assumption can be replaced with an estimate of the form, for some absolute constant $C_4>0$,
\[
\sum_{z<p\leq w}\frac{|A(p)|^4}p\leq C_4\sum_{z<p\leq w}\frac1p+O_f\left(\frac1{\log z}\right)
\]
uniformly in $z<p\leq w$. As Mangerel notes \cite[Remark 1.3]{Mangerel2023}, such an estimate is not generally available unconditionally for $n\geq 3$. However, we show that using known results from the theory of automorphic forms it is possible to obtain such a result without Generalised Ramanujan--Petersson Conjecture for self-dual $f$ in the case $n=3$. This relies particularly on the description of self-dual Maass cusp forms for the group $\mathrm{SL}_3(\Z)$ due to Ramakrishnan \cite{Ramakrishnan2014}. 

The proof of Theorem \ref{thirdMain_theorem} is a simple application of Wirsing's theorem and multiplicativity of $A(m)$ in the form that if $r$ is such that $A(r)<0$, then $A(m)$ and $A(mr)$ have opposite signs for $(m,r)=1$. Again removing the assumption of Generalised Ramanujan--Petersson Conjecture for $n\in\{2,3\}$ (Theorem \ref{fourthMain_theorem}) is based on the observation that such an input can be replaced with 
\[
\sum_{\substack{z<p\leq w\\A(p)\neq 0}}\frac1p\geq\alpha\sum_{z<p\leq w}\frac1p-O\left(\frac1{\log z}\right),
\]
which can then be verified for $n\in\{2,3\}$. Such an assumption allows us again to apply results from \cite{Matomaki-Radziwill2020}.

Theorem \ref{fifthMain_theorem} relies on estimates for
\[
\sum_{x\leq m\leq x+H}A(m) \quad \text{and} \quad \sum_{x\leq m\leq x+H}|A(m)|^2.
\]
These are needed both for individual $x$ and for averages over $x\sim X$. Such bounds are achieved by nowadays standard countour integration techniques. We assume Generalised Lindel\"of Hypothesis for simplicity, but a weaker pointwise bound for $L(1/2+it,f)$ should also suffice. Finally, the results of the previous theorem can be somewhat sharpened (Theorem \ref{sixthMain_theorem}) if one assumes Generalised Ramanujan--Petersson Conjecture. This is a fairly straightforward consequence of Shiu's bound and the observation that Generalised Ramanujan--Petersson Conjecture implies the estimate $|A(p)|^q\leq n^{q-2}|A(p)|^2$ for any $q>2$.

\subsection{Acknowledgements}
This work was supported by the Finnish Cultural Foundation and the Emil Aaltonen Foundation. The author wishes to thank Kaisa Matom\"aki for interesting discussions concerning sign changes and multiplicative functions throughout the years, and in particular for pointing out the reference \cite{Matomaki-Radziwill2020}. He is also grateful to Steve Lester for useful suggestions.

\subsection{Declaration on the use of AI} ChatGPT $5.6$ was used to identify relevant literature (which in particular led to locating the reference \cite{Kim-Shahidi2002}) and to proofread the manuscript. The output from proofreading gave rise to Remark \ref{Improvement} and Theorem \ref{sixthMain_theorem}, which were carefully reviewed, verified, and revised by the author, who takes full responsibility for their content. Otherwise rest of the text and mathematical arguments in this paper are entirely human generated.

\section{Organisation of the paper}

\noindent This paper is organised as follows. In Section $5$ we gather the tools needed in the proofs. Theorem \ref{Main_theorem} is proved in Section $6$. Section $7$ is devoted to the proof of Theorem \ref{secondMain_theorem} and the deduction of Corollary \ref{Corollary} Section $8$ occupies the proofs of Theorems \ref{thirdMain_theorem} and Theorem \ref{fourthMain_theorem}. The final section completes the proofs of Theorems \ref{fifthMain_theorem} and \ref{sixthMain_theorem}. 

\section{Notation}

\noindent We use standard asymptotic notation. If $f$ and $g$ are complex-valued functions defined on some set, say $\mathcal G$, then we write $f\ll g$ to signify that $|f(x)|\leqslant C|g(x)|$ for all $x\in\mathcal G$ for some implicit constant $C\in\mathbb R_+$. The notation $O(g)$ denotes a quantity that is $\ll g$, and $f\asymp g$ means that both $f\ll g$ and $g\ll f$. We write $f=o(g)$ if $g$ never vanishes in $\mathcal G$ and $f(x)/g(x)\longrightarrow 0$ as $x\longrightarrow\infty$. Moreover, we write\footnote{This should not be confused with the notation $\ell\sim L$ used also for $L\leq \ell\leq 2L$ in this paper.} $f\sim g$ if $f(x)/g(x)\longrightarrow 1$ as $x\longrightarrow\infty$. The letter $\varepsilon$ denotes a positive real number, whose value can be fixed to be arbitrarily small, and whose value can be different in different instances in a proof.  All implicit constants are allowed to depend on $\varepsilon$, on the implicit constants appearing in the assumptions of theorem statements, and on anything that has been fixed. When necessary, we will use subscripts $\ll_{\alpha,\beta,...},O_{\alpha,\beta,...}$, etc. to indicate when implicit constants are allowed to depend on quantities $\alpha,\beta,...$

Let us also write $1_\mathcal A(x)$ for the characteristic function for $x\in\mathcal A$. Furthermore, $\Re(s)$ and $\Im(s)$ are the real- and imaginary parts of $s\in\mathbb C$, respectively, and occasionally we write $\sigma$ for $\Re(s)$. We write $e(x):=e^{2\pi ix}$. As usual, $\mu$ denotes the M\"obius function, $\A_\Q$ denotes the adele ring of $\Q$, and $\text{sgn}(m)$ denotes the sign of an integer $m$. Also $\zeta$ is the Riemann zeta function, $\Gamma$ denotes the Gamma function,  and $\gamma$ is the Euler--Mascheroni constant.

 %Of course $\zeta$ denotes the Riemann zeta function and we write its local factors as $\zeta_p(s):=(1-p^{-s})^{-1}$ so that $\zeta(s)=\prod_p\zeta_p(s)$ for $\Re(s)>1$.

\section{Preliminaries}

\subsection{Automorphic forms}

Let $f$ be a Hecke--Maass cusp form of type $(\nu_1,...,\nu_{n-1})\in\C^{n-1}$ for the group $\mathrm{SL}_n(\Z)$ with Fourier coefficients $A(m_1,...,m_{n-1})$. If the form is normalised so that $A(1,...,1)=1$, then the eigenvalue under the $m^{\text{th}}$ Hecke operator is given by $A(m,1,...,1)$. The key property is that the Hecke eigenvalues $A(m,1,...,1)$ are multiplicative \cite[Theorem 9.3.11]{Goldfeld2006}.

Next we define the notion of a dual Maass cusp form. Let
\begin{align*}
\widetilde f(z):=f(w\cdot {}^t(z^{-1})w),\qquad\text{where}\qquad w:=\begin{pmatrix} & & & (-1)^{\lfloor\frac n2\rfloor} \\
& & 1 & \\
& \iddots & & \\
1 & & &
\end{pmatrix}. 
\end{align*}
Then $\widetilde f$ is a Maass cusp form of type $(\nu_{n-1},....,\nu_1)\in\C^{n-1}$ for the group $\mathrm{SL}_n(\Z)$ and it is called the dual Hecke--Maass cusp form of $f$. It turns out that 
\[
A_f(m_1,....,m_{n-1})=A_{\widetilde f}(m_{n-1},...,m_1)
\]
for every $m_1,...,m_{n-1}\geq 1$. For Hecke eigenforms a simple application of M\"obius inversion shows that $A(m_1,...,m_{n-1})=\overline{A(m_{n-1},...,m_1)}$. By combining the previous two observations we obtain
\[
\overline{A_f(m,1,...,1)}=A_{\widetilde f}(m,1,...,1).
\]
In particular, if the form $f$ is self-dual ($f=\widetilde f$), then the Hecke eigenvalues are real-valued. 

Associated to such a form $f$ is the $L$-series given by
\[
L(s,f):=\sum_{m=1}^\infty\frac{A(m,1,...,1)}{m^s},
\]
which converges for $\Re(s)>1$. This has an entire continuation to the whole complex plane via the functional equation 
\[
L(s,f)=\pi^{ns-n/2}\frac{G(1-s,\widetilde f)}{G(s,f)}L\left(1-s,\widetilde f\right),
\]
where 
\begin{align*}
G(s,f):=\prod_{j=1}^n\Gamma\left(\frac{s-\lambda_j(\nu)}2\right)\quad\text{and so}\quad G\left(s,\widetilde f\right):=\prod_{j=1}^n\Gamma\left(\frac{s-\widetilde\lambda_j(\nu)}2\right).
\end{align*}
Here $\lambda_j(\nu)$ and $\widetilde\lambda_j(\nu)$ are the Langlands parameters of $f$ and $\widetilde f$, respectively. 

Generalised Lindel\"of Hypothesis in the $t$-aspect states that on the critical line $\sigma=1/2$ one has the estimate $L(1/2+it,f)\ll_\eps (1+|t|)^\eps$ for any $\eps>0$. The Rankin--Selberg $L$-function of two Hecke--Maass cusp forms $f$ and $g$ for the group $\mathrm{SL}_n(\Z)$ is given by
\[
L(s,f\times g):=\zeta(ns)\sum_{m_1,...,m_{n-1}\geq 1}\frac{A_f(m_1,...,m_{n-1})\overline{A_g(m_1,...,m_{n-1})}}{(m_1^{n-1}m_2^{n-2}\cdots m_{n-1})^s},
\]
which converges for $\Re(s)>1$. This $L$-series has an analytic continuation to the whole complex plane if $g\neq\widetilde f$ and has a meromorphic continuation to $\C$ with a simple pole at $s=1$ if $g=\widetilde f$. If we set  
\[
\Lambda(s,f\times g):=\prod_{i=1}^n\prod_{j=1}^n\pi^{(-2+\lambda_i(\nu_f)+\overline{\lambda_j(\nu_g)})/2}\Gamma\left(\frac{s-\lambda_i(\nu_f)-\overline{\lambda_j(\nu_g)}}2\right)L(s,f\times g),
\]
then the functional equation
\[
\Lambda(s,f\times g)=\Lambda\left(1-s,\widetilde f\times\widetilde g\right)
\]
holds; see \cite[Theorem 12.1.4]{Goldfeld2006}.

If $L(s,f)$ has an Euler product representation 
\[
L(s,f)=\sum_{m=1}^\infty\frac{A(m,1,...,1)}{m^s}=\prod_p\prod_{j=1}^n\left(1-\alpha_{j,p}(f)p^{-s}\right)^{-1}
\]
for large enough $\Re(s)$, and similar representation holds for $g$ with parameters $\alpha_{j,p}(g)$, then also the Rankin--Selberg $L$-function can be written as an Euler product
\[
L(s,f\times g)=\prod_p\prod_{k=1}^n\prod_{\ell=1}^n\left(1-\alpha_{k,p}(f)\overline{\alpha_{\ell,p}(g)}p^{-s}\right)^{-1}.
\]
Recall that here the complex numbers $\alpha_{j,p}(f)$ are called the Satake parameters of the underlying Hecke--Maass cusp form $f$. Analytic properties of $L(s,f\times\widetilde f)$ imply that
\[
\sum_{m_1^{n-1}m_2^{n-2}\cdots m_{n-1}\leq x}|A(m_1,m_2,...,m_{n-1})|^2\sim r_f\cdot x
\]
for some constant $r_f>0$ depending on $f$ \cite[Proposition 12.1.6, Remark 12.1.8]{Goldfeld2006}. As noted in \cite[Theorem 6]{Jaasaari2020}, we also have the estimate
\begin{align}\label{RS-asymp}
\sum_{m\leq x}|A(m,1,...,1)|^2\sim r_f\cdot H_f(1)\cdot x. 
\end{align} 
It was also noted in the proof of \cite[Theorem 6]{Jaasaari2020} that for
\[
D_f(s):=\sum_{m=1}^\infty\frac{|A(m,1,...,1)|^2}{m^s}
\]
we have the factorisation 
\[
D_f(s)=L\left(s,f\times\widetilde f\right)H_f(s)
\]
where $H_f(s)$ is holomorphic and bounded in the half-plane $\Re(s)>1/2+\vartheta$. Here $\vartheta$ is an exponent towards Generalised Ramanujan--Petersson Conjecture explained in the next paragraph.

This result can be interpreted as saying that the Fourier coefficients $A(m_1,...,m_{n-1})$ are essentially of constant order of magnitude on average. However, known pointwise bounds for the Fourier coefficients are quite far from the expected truth. Generalised Ramanujan--Petersson Conjecture predicts that an estimate of the form $A(m,1,...,1)\ll_\eps m^\eps$ holds for every $\eps>0$. There are however approximations towards this conjecture. Let $\vartheta=\vartheta(n)\geq 0$ be the smallest non-negative real number so that the estimate $A(m,1,...,1)\ll_\eps m^{\vartheta+\eps}$ holds. It is easy to see that $\vartheta\leq 1/2$ \cite[Proposition 12.1.6]{Goldfeld2006}, but currently it is known that $\vartheta\leq 1/2-1/(n^2+1)$. This result is due to \cite{Kim-Sarnak2003}. For small values of $n$ sharper results are known. We have that $\vartheta(2)\leq 7/64$, $\vartheta(3)\leq 5/14$, and $\vartheta(4)\leq 9/22$ \cite{LRS1999}. Generalised Ramanujan--Petersson Conjecture predicts that the value $\vartheta(n)=0$ is admissible for every $n\geq 2$. An equivalent estimate holds for the Satake parameters of the underlying form $f$. Namely, we have $\alpha_{j,p}(f)\ll_\eps p^{\vartheta(n)+\eps}$ for every prime $p$. 

Assuming Generalised Ramanujan--Petersson Conjecture, we have the automorphic prime number theorem, which says that there exists a positive constant $c$ so that 
\[
\sum_{p\leq x}|A(p)|^2\log p=x+O\left(xe^{-c\sqrt{\log x}}\right).
\]
By partial summation this gives the uniform estimate
\begin{align}\label{uniformRS}
\sum_{z<p\leq w}\frac{|A(p)|^2}p=\sum_{z<p\leq w}\frac1p+O\left(\frac1{\log z}\right)
\end{align}
for $2\leq z<w$. A detailed proof can be found in \cite[Lemma 5.6]{Mangerel2023}. We also have the estimate
\begin{align}\label{uniformbound}
\sum_{z<p\leq w}\frac{A(p)}p=O_f\left(\frac1{\log z}\right).
\end{align}
For certain lower rank forms the automorphic prime number theorem holds unconditionally. We record the following result of Wu and Ye \cite[Theorem 3.]{Wu-Ye2007}.

\begin{lemma}\label{automorphicPNT}
Let $m,m'$ be positive integers. Let $\pi$ be an automorphic representation of $\mathrm{GL}_m(\A_\Q)$ and $\pi'$ be an automorphic representation of $\mathrm{GL}_{m'}(\A_\Q)$ with at least one of them being self-dual.
\begin{enumerate}
\item Suppose that $\max\{m,m'\}\leq 4$. Then we have that 
\begin{align}\label{PNT_rel}
\sum_{p\leq x}A_\pi(p)\overline{A_{\pi'}(p)}\log p=\begin{cases}
\frac{x^{1+i\tau_0}}{1+i\tau_0}+O\left(xe^{-c\sqrt{\log x}}\right) & \text{if }\pi'\simeq \pi\otimes|\det|^{i\tau_0}\quad\text{for some }\tau_0\in\R \\
O\left(xe^{-c\sqrt{\log x}}\right) & \text{otherwise} 
\end{cases}
\end{align}
for some constant $c>0$.
\item If $\max\{m,m'\}\geq 5$, the asymptotic relation (\ref{PNT_rel}) holds under the Hypothesis H (for both $\pi$ and $\pi'$), saying that 
\[
\sum_p\frac{(\log p)^2|A_\pi(p^\nu)|^2}{p^\nu}<\infty
\]
for any fixed $\nu\geq 2$, with error term replaced by $O(x/\log x)$.
\end{enumerate}
\end{lemma}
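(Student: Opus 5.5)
The plan is the classical route to the prime number theorem, run through the Rankin--Selberg $L$-function $L(s,\pi\times\widetilde{\pi}')$. Its logarithmic derivative has the expansion
\[
-\frac{L'}{L}(s,\pi\times\widetilde{\pi}')=\sum_{m\geq 1}\frac{\Lambda(m)\,a_{\pi\times\widetilde\pi'}(m)}{m^s},
\]
where at primes $a_{\pi\times\widetilde\pi'}(p)=A_\pi(p)\overline{A_{\pi'}(p)}$. At prime powers, $a_{\pi\times\widetilde\pi'}(p^\nu)=\sum_{i,j}\alpha_{i,p}(\pi)^\nu\overline{\alpha_{j,p}(\pi')}^{\,\nu}$.

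The first step is to establish
\[
\psi(x,\pi\times\widetilde\pi'):=\sum_{m\leq x}\Lambda(m)a_{\pi\times\widetilde\pi'}(m)=\frac{x^{1+i\tau_0}}{1+i\tau_0}\,\mathbf 1_{\pi'\simeq\pi\otimes|\det|^{i\tau_0}}+O\left(xe^{-c\sqrt{\log x}}\right),
\]
and the second is to strip off the prime powers $p^\nu$ with $\nu\geq 2$.

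For the first step I would take the following analytic inputs from Jacquet--Piatetski-Shapiro--Shalika and Moeglin--Waldspurger.
\begin{itemize}
\item $L(s,\pi\times\widetilde\pi')$ continues meromorphically to $\C$ and has finite order.
\item Its only possible pole in $\Re s\geq 1$ is a simple pole at $s=1+i\tau_0$, which occurs exactly when $\pi'\simeq\pi\otimes|\det|^{i\tau_0}$.
\item It does not vanish on $\Re s=1$ (Shahidi).
\end{itemize}
Next I would prove a zero-free region $\sigma>1-c/\log(|t|+2)$ by Moreno's version of the de la Vallée Poussin argument. One applies the positivity trick to an auxiliary Rankin--Selberg product built from $\pi,\pi'$ and their twists. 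The self-duality of one of $\pi,\pi'$ is used exactly here: it makes the auxiliary Dirichlet series have nonnegative coefficients, and it rules out a Siegel-type zero except possibly a single real one. Since $\pi,\pi'$ are fixed, such a zero only affects constants.

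With the zero-free region in hand, I would apply truncated Perron's formula with $T=\exp(\sqrt{\log x})$ and shift the contour to the left of the zero-free region. A residue arises only from the pole. The remaining integrals are controlled by the standard bound
\[
\frac{L'}{L}(s)\ll\log^2(|t|+2)
\]
near $\Re s=1$, which follows from the functional equation and Hadamard factorisation. Together these give the displayed asymptotic.

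The second step, which I expect to be the main obstacle, is to show that
\[
\sum_{\substack{p^\nu\leq x\\ \nu\geq 2}}|a_{\pi\times\widetilde\pi'}(p^\nu)|\log p
\]
is negligible. Known bounds towards Ramanujan alone are insufficient for $\nu=2$: the crude estimate $|a(p^2)|\ll p^{4\vartheta}$ would need $\vartheta<1/4$, which fails for $\mathrm{GL}_3$ and $\mathrm{GL}_4$.

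The approach I would try first is Cauchy--Schwarz,
\[
|a_{\pi\times\widetilde\pi'}(p^\nu)|\leq n n'\Big(\sum_i|\alpha_{i,p}(\pi)|^{2\nu}\Big)^{1/2}\Big(\sum_j|\alpha_{j,p}(\pi')|^{2\nu}\Big)^{1/2}.
\]
The resulting power sums of Satake parameters can be expressed through, or bounded by, $|A(p^\nu)|^2$-type quantities. Hypothesis H then gives convergence of $\sum_p(\log p)^2|A(p^\nu)|^2p^{-\nu}$. By partial summation and Cauchy--Schwarz over $p$, the contribution of each fixed $\nu\geq 2$ is $o(x)$, and with more care it is $O(x/\log x)$.

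Large $\nu$ are handled using $\vartheta<1/2$: then $|a(p^\nu)|\ll p^{2\nu\vartheta}$, and the sum over $p\leq x^{1/\nu}$ is $\ll x^{2\vartheta+1/\nu}$, which is $\ll x^{1-\delta}$ once $\nu$ exceeds a bound depending on $\vartheta$. Only finitely many $\nu$ therefore need Hypothesis H.

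For $\max\{m,m'\}\leq 4$, Hypothesis H is known unconditionally: Rudnick--Sarnak for $m\leq 3$ and Kim for $m=4$. In this range one can moreover get a power saving for the prime-square terms, so the error $xe^{-c\sqrt{\log x}}$ survives, which gives (1). For (2), assuming Hypothesis H only yields the weaker $O(x/\log x)$ from the prime-power removal. Moreno's zero-free region argument for general $m$ goes through in the same way, which gives the stated weaker error term.
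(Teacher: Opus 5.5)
The paper gives no proof of this lemma; it quotes it from Wu--Ye. Your outline follows the standard architecture behind that result: Moreno's zero-free region for $L(s,\pi\times\widetilde\pi')$ using self-duality, a prime number theorem for $\sum_{n\le x}\Lambda(n)a_{\pi\times\widetilde\pi'}(n)$, then removal of prime powers. As written, however, it does not prove part (1).

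Your own partial-summation argument shows that Hypothesis H gives only $O(x/\log x)$ for each fixed $\nu\ge 2$. So knowing H for $m\le 4$ (Rudnick--Sarnak, Kim) yields the error term of part (2), not $O(xe^{-c\sqrt{\log x}})$. For part (1) you need a power saving
\[
\sum_{p\le x^{1/\nu}}|a_\pi(p^\nu)|^2\log p\ll x^{1-\delta},\qquad a_\pi(p^\nu):=\sum_i\alpha_{i,p}(\pi)^\nu,
\]
for every $\nu$ with $2\le\nu\le 1/(1-2\vartheta(m))$. With the known bounds $\vartheta(3)\le 5/14$ and $\vartheta(4)\le 9/22$, this includes $\nu=3$ for $\mathrm{GL}_3$ and all $\nu\le 5$ for $\mathrm{GL}_4$, not just prime squares. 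You only assert this, yet it is the whole difference between (1) and (2).

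One way to supply it uses Newton's identities. They write $a_\pi(p^\nu)$ as a polynomial in the elementary symmetric functions $e_k(p)$ of the Satake parameters. Here $e_m(p)=\omega_\pi(p)$ and $e_{m-1}(p)=\omega_\pi(p)\overline{e_1(p)}$, and for $m=4$, $e_2(p)$ is a coefficient of Kim's exterior square lift. Rankin--Selberg theory then gives $\sum_{p\le y}|e_k(p)|^2\log p\ll y^{1+\eps}$ for each $k$. In each monomial, bound all but one factor pointwise via $|e_j(p)|\ll p^{j\vartheta(m)}$. This gives $\ll x^{(1+2\vartheta(m)(\nu-1))/\nu+\eps}$, a power saving because $\vartheta(m)<1/2$.

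Two further steps also need repair.

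(i) The truncation error in Perron's formula is $\sum_{x/2<n<2x}\Lambda(n)|a_{\pi\times\widetilde\pi'}(n)|\min\{1,x/(T|x-n|)\}$, which needs control of the coefficients near $x$. Without Ramanujan, the pointwise bound only gives $x^{1+\vartheta(m)+\vartheta(m')+\eps}/T$. That is useless for $T=\exp(\sqrt{\log x})$, and enlarging $T$ destroys the saving from the zero-free region. The standard remedy is to treat $\pi\times\widetilde\pi$ first. Its coefficients $\Lambda(n)a_{\pi\times\widetilde\pi}(n)$ are non-negative, so the sharp sum can be sandwiched between smoothed sums evaluated by contour integration, with no pointwise input. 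This also yields $\sum_{x<n\le x+h}\Lambda(n)a_{\pi\times\widetilde\pi}(n)\ll h+xe^{-c\sqrt{\log x}}$. For the general pair, the inequality $|a_{\pi\times\widetilde\pi'}(p^\nu)|\le\frac12\bigl(a_{\pi\times\widetilde\pi}(p^\nu)+a_{\pi'\times\widetilde\pi'}(p^\nu)\bigr)$ then controls the smoothing error.

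(ii) Your Cauchy--Schwarz over Satake parameters replaces $a_{\pi\times\widetilde\pi'}(p^\nu)$ by $\sum_i|\alpha_{i,p}(\pi)|^{2\nu}$, which Hypothesis H does not control. In Rudnick--Sarnak's formulation, H concerns the power sums $a_\pi(p^\nu)$, the coefficients of $-L'/L$, not Hecke eigenvalues. Your bound discards exactly the cancellation that H measures. Use instead the exact identity $a_{\pi\times\widetilde\pi'}(p^\nu)=a_\pi(p^\nu)\overline{a_{\pi'}(p^\nu)}$ at unramified $p$, and apply Cauchy--Schwarz over $p$ only. This is why H is assumed for both representations.
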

\noindent It is well-known that under the Langlands correspondence cuspidal automorphic representations of $\mathrm{GL}_2(\A_\Q)$ correspond to two-dimensional continuous Galois representations $\rho:\mathrm{Gal}(\overline{\Q}/\Q)\longrightarrow\mathrm{GL}_2(\C)$. The classification of these representations relies on the image of corresponding projective representations $\overline\rho:\mathrm{Gal}(\overline{\Q}/\Q)\longrightarrow\mathrm{PGL}_2(\C)$. Thus automorphic representations of $\mathrm{GL}_2(\A_\Q)$ fall into distinct types corresponding to finite subgroups of $\mathrm{PGL}_2(\C)$. These types are called \emph{cyclic} (projective image is a cyclic group $C_n$), \emph{dihedral} (projective image is a dihedral group $D_{2n}$), \emph{tetrahedral} (projective image is the alternative group $A_4$), \emph{octahedral} (projective image is the symmetric group $S_4)$, and \emph{icosahedral} (projective image is the alternative group $A_5$).  

\subsection{Moment bounds}

\noindent We begin by deriving certain second moment bounds under Generalised Lindel\"of Hypothesis. For a closely related result (whose strategy we follow in the proof) for the $k$-fold divisor function, see \cite[Theorem 1.3]{Baluyot-Castillo2024}.

\begin{lemma}\label{RS_average} 
Let $f$ be a Hecke--Maass cusp form for the group $\mathrm{SL}_n(\Z)$ and suppose that $1\leq H\ll X$.
\begin{enumerate}
\item Assume Generalised Lindel\"of Hypothesis for $L(s,f)$ and $L(s,f\times f)$ in the $t$-aspect. Let $r_f$ be as in (\ref{RS-asymp}). Then we have
\[
\int\limits_X^{2X}\left|\sum_{x\leq m\leq x+H}|a(m)|^2-r_fH_f(1)\cdot H\right|^2\,\mathrm d x\ll_\eps HX^{1+2\vartheta+\eps}.
\]
\item Assume Generalised Lindel\"of Hypothesis for $L(s,f)$ in the $t$-aspect. Then we have  
\[
\int\limits_X^{2X}\left|\sum_{x\leq m\leq x+H}a(m)\right|^2\,\mathrm d x\ll_\eps HX^{1+\eps}.
\]
\end{enumerate}
\end{lemma}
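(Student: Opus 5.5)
We follow the strategy of \cite[Theorem 1.3]{Baluyot-Castillo2024}. We treat part (2) first and then part (1). Write $b(m)$ for either $a(m)$ or $|a(m)|^2$, with Dirichlet series $L(s,f)$ or $D_f(s)=L(s,f\times\widetilde f)H_f(s)$ respectively; self-duality gives $\widetilde f=f$.

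\emph{Step 1: reduction to a mean value of the Dirichlet series.} We pass to a Mellin representation of the short sum. By Perron's formula,
\[
\sum_{x\leq m\leq x+H}b(m)=\frac1{2\pi i}\int_{(c)}B(s)\frac{(x+H)^s-x^s}{s}\,\mathrm ds .
\]
We would rather smooth: replace the sharp cutoff by the Plancherel (Parseval for Mellin transforms) approach. Put $x=e^u$ and note that
\[
\frac{(x+H)^s-x^s}{s}\approx x^s\min\left(\frac HX,\frac1{|t|}\right)
\]
for $x\sim X$. Shifting the contour to $\Re(s)=1/2+\vartheta+\eps$, we collect the residue at $s=1$. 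This residue is zero for $a(m)$, since $L(s,f)$ is entire. For $|a(m)|^2$ it is $r_fH_f(1)\big((x+H)-x\big)=r_fH_f(1)H$, up to a lower-order correction that is harmless. After this shift, the remaining integral is
\[
\frac1{2\pi}\int B(\sigma+it)\,x^{\sigma+it}\,w(t)\,\mathrm dt, \qquad w(t)=\min\left(\frac HX,\frac1{|t|}\right).
\]
We then integrate its square over $x\in[X,2X]$. We insert a smooth weight majorising $1_{[X,2X]}$ and apply Plancherel in $\log x$, in the form of Gallagher's lemma or Montgomery--Vaughan type mean-value theorems. This gives
\[
\ll X^{1+2\sigma}\int_{\R}|B(\sigma+it)|^2\,w(t)^2\,\mathrm dt .
\]
Truncating Perron at height $T=X^{A}$ costs only $O(X^{-10})$ once we use a smooth weight. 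The sharp-cutoff difference can be handled by comparing with smoothed sums of slightly shorter and longer length, because $b\geq0$ in case (1). In case (2) it can be handled by the standard truncated Perron error.

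\emph{Step 2: pointwise bounds on the shifted line.} For part (2) we take $\sigma=1/2$ and use Generalised Lindel\"of, $|L(1/2+it,f)|\ll(1+|t|)^\eps$. Then
\[
\int|B|^2w^2\ll X^\eps\left(\frac{H^2}{X^2}\cdot\frac XH+\int_{|t|>X/H}t^{-2}\,\mathrm dt\right)\ll X^{\eps}\,\frac HX,
\]
so the whole quantity is $\ll X^{2}\cdot X^\eps H/X=HX^{1+\eps}$. This is exactly the bound claimed in part (2).

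For part (1) we cannot go to $\sigma=1/2$, because $H_f$ is only controlled for $\Re(s)>1/2+\vartheta$. We therefore take $\sigma=1/2+\vartheta+\eps$, where $H_f$ is bounded and Lindel\"of for $L(s,f\times f)$ gives $|L(\sigma+it,f\times f)|\ll(1+|t|)^\eps$ (by convexity between the critical line and absolute convergence). The same computation then gives $X^{1+2\sigma}\cdot X^{\eps}H/X^{2}\cdot X/X$; tracking the powers carefully, this is $HX^{2\sigma+\eps}=HX^{1+2\vartheta+\eps}$, as required.

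\emph{Main obstacle.} We expect the main difficulty to be the careful passage from sharp to smooth cutoffs together with the Plancherel step in the $|a(m)|^2$ case. Specifically, we must check that the residue at $s=1$ reproduces the main term $r_fH_f(1)H$ with an error absorbed in the bound, namely $O(H^2/X)$, which is acceptable since $(H^2/X)^2X\leq HX^{1+2\vartheta}$. We must also confirm that Lindel\"of for $L(s,f\times f)$ at $\Re(s)=1/2$, combined with Phragm\'en--Lindel\"of and the boundedness of $H_f$, gives the polynomially-$\eps$ bound uniformly on $\Re(s)=1/2+\vartheta+\eps$.
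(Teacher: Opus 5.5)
Your skeleton (Perron, contour shift to $\Re(s)=\sigma_0+\eps$ with $\sigma_0=1/2$ resp.\ $1/2+\vartheta$, Plancherel in $\log x$, Lindel\"of on the shifted line) is right, and your final exponent $HX^{2\sigma_0+\eps}$ agrees with the paper. However, the central mean-value step is not justified as written.

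After the shift the integrand is $B(\sigma+it)\,x^{\sigma+it}\,\phi(H/x,s)$ with $\phi(\beta,s)=((1+\beta)^s-1)/s$. This kernel depends on $x$ through $\beta=H/x$, so it is not of the form $x^{it}k(t)$. You therefore cannot replace it by its size $w(t)=\min(H/X,1/|t|)$ inside the $t$-integral and then apply Plancherel in $u=\log x$. The bound $X^{1+2\sigma}\int|B|^2w^2\,\mathrm dt$ does not follow from what you wrote. Nor is the $x$-dependence a harmless bounded multiplier: for $|t|\gg X/H$ the factor $(1+H/x)^{it}$ oscillates in $u$ with frequency $\asymp |t|H/X\gg 1$.

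Gallagher's lemma does not rescue this, since it runs the other way: it bounds mean values of Dirichlet polynomials by variances of sums over multiplicative intervals $[y,ye^{1/T}]$. So your ``main obstacle'' paragraph points at the wrong difficulty. Incidentally, the residue at $s=1$ is exactly $c_fH$, with no $O(H^2/X)$ correction.

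The paper supplies exactly the missing device, the Saffari--Vaughan inequality. It bounds the variance over the fixed-length intervals $[x,x+H]$ by $\frac XH\int_0^{8H/X}$ of the variance over the fixed-ratio intervals $[x,(1+\beta)x]$. For fixed $\beta$ the Perron kernel $x^s((1+\beta)^s-1)/s$ factors exactly, and Plancherel gives
$\ll X^{1+2\sigma_0+\eps}\int|L(\sigma+it)|^2\min\{\beta^2,(1+|t|)^{-2}\}\,\mathrm dt\ll X^{1+2\sigma_0+\eps}\beta^{1-\eps}$.
Integrating over $\beta$ then gives $HX^{2\sigma_0+\eps}$.

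If you prefer to stay with fixed-length intervals, you can repair your argument by splitting at $|t|=cX/H$:
\begin{itemize}
\item For $|t|>cX/H$, treat $(x+H)^s/s$ and $x^s/s$ separately. Each is separable, the first after substituting $y=x+H$.
\item For $|t|\le cX/H$, expand $(1+H/x)^s=\sum_k\binom{s}{k}(H/x)^k$. Each term $H^kx^{s-k}\binom{s}{k}/s$ is separable, and the series is controlled by $\sum_k|\binom{s}{k}|\beta^k\le(1-\beta)^{-|s|}\ll 1$ when $|s|\beta\ll 1$ and $\beta\le 1/2$.
\end{itemize}
With either device in place, the rest of your computation goes through, including the Phragm\'en--Lindel\"of bound and the boundedness of $H_f$ on $\Re(s)=1/2+\vartheta+\eps$.
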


\begin{proof}
The underlying mechanism in both of the estimates is similar. More generally, let $c(m)$ be complex numbers so that 
\[
L(s):=\sum_{m=1}^\infty\frac{c(m)}{m^s}.
\]
has a meromorphic continuation to the half-plane $\Re(s)>\sigma_0$ (where $0<\sigma_0<1)$ with a possible simple pole of residue $c_f$ at $s=1$. Furthermore, suppose that $L(s)$ satisfies Generalised Lindel\"of Hypothesis $L(1/2+it)\ll_\eps(1+|t|)^\eps$ for every $\eps>0$. Using an inequality of Saffari and Vaughan \cite{Saffari-Vaughan1977, Goldston-Suriajaya2023} we have 
\begin{align}\label{SV_bound}
\int\limits_X^{2X}\left|\sum_{x\leq m\leq x+H}c(m)-c_fH\right|^2\,\mathrm d x\ll\frac XH\int\limits_0^{8H/X}\int\limits_0^X\left|\sum_{x\leq m\leq (1+\beta)x}c(m)-c_f\beta x\right|^2\,\mathrm d x\,\mathrm d \beta.
\end{align}
Now using Perron's formula and shifting contour to the line $\Re(s)=\sigma_0+\eps$ we have
\[
\sum_{x\leq m\leq (1+\beta)x}c(m)-c_f\beta x=\frac1{2\pi i}\int\limits_{(\sigma_0+\eps)}L(s)x^s\frac{(1+\beta)^s-1}s\,\mathrm d s.
\]
Next we observe that 
\[
\frac{(1+\beta)^{\sigma+it}-1}{\sigma+it}\ll\min\left\{\beta,\frac1{1+|t|}\right\}.
\]
Thus smoothly decomposing the interval $[X,2X]$ and using Plancherel's theorem we have 
\begin{align*}
\int\limits_X^{2X}\left|\sum_{x\leq m\leq (1+\beta)x}c(m)-c_f\beta x\right|^2\,\mathrm d x\ll_\eps X^{1+2\sigma_0+\eps}\int\limits_{-\infty}^\infty|L(\sigma+it)|^2\min\left\{\beta^2,\frac1{1+|t|^2}\right\}\,\mathrm d t.
\end{align*}
Using Generalised Lindel\"of Hypothesis assumption we may bound the right-hand side by $\ll_\eps X^{1+2\sigma_0+\eps}\beta^{1-\eps}$. Plugging this into (\ref{SV_bound}) yields 
\[
\int\limits_X^{2X}\left|\sum_{x\leq m\leq x+H}c(m)-c_f\cdot H\right|^2\,\mathrm d x\ll_\eps \frac XH\cdot X^{1+2\sigma_0+\eps}\int\limits_0^{8H/X}\beta^{1-\eps}\,\mathrm d \beta\ll_\eps HX^{2\sigma_0+\eps}.
\]
For part (1) we have $c(m)=|A(m)|^2$, in which case $\sigma_0=1/2+\vartheta$ and $c_f=r_fH_f(1)$. For part (2) we have $c(m)=A(m)$, $\sigma_0=1/2$, and $c_f=0$. This completes the proof. 
\end{proof}

\noindent The following bounds are also standard. 

\begin{lemma}\label{GLH_conseq}
Let $f$ be a Hecke--Maass cusp form for the group $\mathrm{SL}_n(\Z)$. Assume Generalised Lindel\"of Hypothesis for $L(s,f)$ and $L(s,f\times f)$ in the $t$-aspect. Let $r_f$ be as in (\ref{RS-asymp}). Then we have 
\begin{enumerate}
\item 
\[
\sum_{m\leq x}A(m)\ll_\eps x^{1/2+\eps};
\]
\item 
\[
\sum_{m\leq x}|A(m)|^2=r_fH_f(1)x+O_{f,\eps}\left(x^{1/2+\vartheta+\eps}\right).
\]
\end{enumerate}
\end{lemma}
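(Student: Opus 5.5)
Both bounds come from the standard truncated Perron formula followed by a contour shift. The input is the Dirichlet series $L(s,f)$ for part (1) and $D_f(s)=L(s,f\times\widetilde f)H_f(s)$ for part (2). Since $f$ is self-dual, $\widetilde f=f$, so the Lindel\"of assumption on $L(s,f\times f)$ is exactly what we need for $D_f$.

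For part (1), I would start from
\[
\sum_{m\leq x}A(m)=\frac1{2\pi i}\int_{1+\eps-iT}^{1+\eps+iT}L(s,f)\frac{x^s}{s}\,\mathrm d s+O_\eps\Bigl(\frac{x^{1+\vartheta+\eps}}{T}\Bigr).
\]
The error term uses $|A(m)|\ll m^{\vartheta+\eps}$ together with the usual estimate for the terms with $m$ near $x$. Since $\vartheta\leq1/2$, it may be simpler to control the tail by the Rankin--Selberg average (\ref{RS-asymp}) via Cauchy--Schwarz, but either way the error is $O(x^{A}/T)$ for some fixed $A$.

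Next I move the line to $\Re(s)=1/2$. No poles are crossed, because $L(s,f)$ is entire and the point $s=0$ lies to the left of the new line. Generalised Lindel\"of gives $L(1/2+it,f)\ll(1+|t|)^\eps$. Convexity between the critical line and $\Re(s)=1+\eps$, where the series is bounded, controls the horizontal segments by $\ll T^{\eps}x^{1+\eps}/T$. The vertical integral is $\ll x^{1/2}T^{\eps}$. Choosing $T$ a large power of $x$ gives $\ll x^{1/2+\eps}$.

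Part (2) follows the same scheme with $D_f(s)$. This time I shift to $\Re(s)=1/2+\vartheta+\eps$, the boundary of the region where $H_f$ is holomorphic and bounded. The only pole crossed is the simple pole of $L(s,f\times\widetilde f)$ at $s=1$. Its residue gives the main term $r_fH_f(1)x$; the identification of the constant matches (\ref{RS-asymp}).

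On the new line I need $L(1/2+\vartheta+\eps+it,f\times f)\ll(1+|t|)^{\eps}$. This follows from Lindel\"of on the critical line, the Phragm\'en--Lindel\"of principle, and boundedness for $\Re(s)>1$, after multiplying by $(s-1)$ to remove the pole. Since $H_f$ is bounded there, the shifted integral is $\ll x^{1/2+\vartheta+\eps}T^{\eps}$. The Perron truncation error is again negligible for $T=x^{C}$.

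The main obstacle is the truncation error in Perron's formula. Without Ramanujan the coefficients are not pointwise $\ll m^\eps$. I would handle this with the standard form of Perron's formula whose error involves $\sum_m|c(m)|(x/m)^{\sigma}\min(1,1/(T|\log(x/m)|))$. Bounding the coefficients by $m^{2\vartheta+\eps}$ in part (2), and by $m^{\vartheta+\eps}$ in part (1), and taking $T$ a sufficiently large power of $x$ makes this error smaller than the claimed bounds.
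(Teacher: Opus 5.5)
Your proof is correct, and it differs from the paper's only in how the sharp cutoff at $x$ is handled.

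The paper never truncates. It works with the Riesz mean $R(x)=\sum_{m\le x}c(m)(x-m)$, whose kernel $x^{s+1}/(s(s+1))$ is absolutely integrable on vertical lines, and moves the whole line to $\Re(s)=\sigma_0+\eps$. It then recovers the sharp sum from $R(x+1)-R(x)$, paying $O(x^{\theta+\eps})$ for the terms with $m\in(x,x+1]$. The bound $|K_x(s)|\ll\min\{x^\sigma/(1+|t|),\,x^{\sigma+1}/(1+|t|^2)\}$ makes the line integral $\ll x^{\sigma_0+\eps}$, with no parameter $T$ to optimise.

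Your truncated Perron formula is the more familiar textbook route. It does oblige you to bound the truncation error and the horizontal segments at height $T$ separately. Your identification of the main-term constant by comparison with (\ref{RS-asymp}) is a tidy touch, since it avoids checking directly that the residue equals $r_fH_f(1)$.

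One thing to correct in the write-up: the displayed error $O(x^{1+\vartheta+\eps}/T)$ is not uniform in $x$. The term of the Perron error coming from the integer $m$ nearest to $x$ (or $m=x$ itself) can be as large as $|c(m)|$, and it does not decay as $T\to\infty$. It is $\ll x^{\vartheta+\eps}$ in part (1) and $\ll x^{2\vartheta+\eps}$ in part (2). It is therefore absorbed into the claimed bounds because $\vartheta\le 1/2$, not because $T$ is large, as your final paragraph suggests. This is precisely the counterpart of the paper's $O(x^{\theta+\eps})$ term.
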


\begin{proof}
\noindent Again the underlying mechanism in both of the estimates is similar. Let $c(m)$ and $L(s)$ be as in the proof of Lemma \ref{RS_average}. Suppose further that $|c(m)|\ll_\eps m^{\theta+\eps}$ for some $0\leq \theta<1$. To estimate the coefficient sum
\[
\sum_{m\leq x}c(m)
\]
we study its first order Riesz weighted sum
\[
R(x):=\sum_{m\leq x}c(m)(x-m). 
\]
By Mellin inversion we have 
\[
R(x)=\frac1{2\pi i}\int\limits_{(2)}L(s)\frac{x^{s+1}}{s(s+1)}\,\mathrm d s.
\]
We shift the contour to the line $\Re(s)=\sigma_0+\eps$ with a possible simple pole at $s=1$ contributing the residue $c_f\cdot x^2/2$. Thus
\[
R(x)=c_f\cdot\frac{x^2}2+\frac1{2\pi i}\int\limits_{(\sigma_0+\eps)}L(s)\frac{x^{s+1}}{s(s+1)}\,\mathrm d s.
\]
Now observe that we have
\[
\sum_{m\leq x}c(m)=R(x+1)-R(x)+O_\eps\left(x^{\theta+\eps}\right)
\]
and consequently
\[
\sum_{m\leq x}c(m)=c_f\left(x+\frac12\right)+\frac1{2\pi i}\int\limits_{(\sigma_0+\eps)}L(s)K_x(s)\,\mathrm d s+O_\eps\left(x^{\theta+\eps}\right),
\]
where
\[
K_x(s):=\frac{(x+1)^{s+1}-x^{s+1}}{s(s+1)}.
\]
We have the simple estimate
\[
|K_x(s)|\ll\min\left\{\frac{x^\sigma}{1+|t|},\frac{x^{\sigma+1}}{1+|t|^2}\right\}.
\]
Using the bounds to treat the ranges $|t|\leq x$ and $|t|>x$ separately, together with the bound $L(\sigma_0+\eps+it)\ll(1+|t|)^\eps$, in the integral over $\Re(s)=\sigma_0+\eps$ it follows that 
\[
\sum_{m\leq x}c(m)=c_f\left(x+\frac12\right)+O_\eps\left(x^{\max\{\sigma_0,\theta\}+\eps}\right).
\]
To prove (1), we have $c(m)=A(m)$, $c_f=0$, $\theta=\vartheta<1/2$, and $\sigma_0=1/2$. Thus we immediately obtain
\[
\sum_{m\leq x}A(m)\ll_\eps x^{1/2+\eps},
\]
as claimed.

For (2), we have $c(m)=|A(m)|^2$. By \cite{Jaasaari2020} we have $L(s)=L(s,f\times f)H_f(s)$, where $H_f(s)$ is bounded and holomorphic in $\Re(s)>1/2+\vartheta$. Furthermore, $L_f(s)$ has a simple pole at $s=1$ (from the pole of the Rankin--Selberg $L$-function at $s=1$) with residue
\[
c_f=H_f(1)\text{Res}_{s=1}L(s,f\times f)=r_fH_f(1). 
\]
We also have $\sigma_0=1/2+\vartheta$ and $\theta=2\vartheta<1/2+\vartheta$. So have shown that 
\[
\sum_{m\leq x}|A(m)|^2=r_fH_f(1)x+O_{f,\eps}\left(x^{1/2+\vartheta+\eps}\right),
\]
as desired. This completes the proof. 
\end{proof}

\subsection{Multiplicative functions}

\noindent We now gather results from the theory of multiplicative functions needed in the proofs. The following is \cite[Corollary 1.1]{Matomaki-Radziwill2020}.

\begin{prop}\label{MR}
Let $\mathcal N$ be a multiplicative subset of $\N$. Let $\phi:\N\longrightarrow[-1,1]$ be a multiplicative function. Suppose that there exists a constant $\alpha>0$ such that for all $2\leq z\leq w$,
\begin{align}\label{MR-condition}
\sum_{\substack{z<p\leq w\\p\in\mathcal N}}\frac1p\geq\alpha\sum_{z<p\leq w}\frac1p-O\left(\frac1{\log z}\right).
\end{align}
Then there exists a constant $\kappa=\kappa(\alpha)>0$ such that, for all $\delta\in(0,1/1000)$ and $2\leq h_0\leq X$,
\[
\left|\frac1{h_0}\sum_{\substack{x\leq m\leq x+h_0\delta(\mathcal N;X)^{-1}\\m\in\mathcal N}}\phi(m)-\frac1{X\delta(\mathcal N;X)}\sum_{\substack{m\sim X\\m\in\mathcal N}}\phi(m)\right|<\delta
\]
outside of a set of $[X,2X]$ of cardinality $\ll Xh_0^{-\delta^\kappa}$. Here
\[
\delta(\mathcal N;X):=\prod_{\substack{p\leq X\\p\not\in\mathcal N}}\left(1-\frac1p\right). 
\]
\end{prop}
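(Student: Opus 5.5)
The plan is to deduce the proposition from the general short-interval theorem of Matom\"aki and Radziwi{\l\l} for multiplicative functions supported on sparse multiplicative sets. In practice it is simply quoted from \cite{Matomaki-Radziwill2020}, but it can be reconstructed as follows. Put $\psi:=\phi\cdot 1_{\mathcal N}$. Since $\mathcal N$ is multiplicative, $\psi$ is multiplicative, and it takes values in $[-1,1]$. Write $h:=h_0\delta(\mathcal N;X)^{-1}$.

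The statement is a variance estimate. It suffices to show
\[
\frac1X\int_X^{2X}\Bigl|\frac1h\sum_{x\le m\le x+h}\psi(m)-\frac1X\sum_{m\sim X}\psi(m)\Bigr|^2\,\mathrm dx\ll \delta(\mathcal N;X)^2\bigl(h_0^{-c}+(\log X)^{-c}\bigr)
\]
with a suitable $c=c(\alpha)>0$. Chebyshev's inequality then converts this into an exceptional set of size $\ll Xh_0^{-\delta^{\kappa}}$. The $\delta$-dependence in that exponent comes from tracking how the admissible choices of the parameters below depend on the target accuracy $\delta$.

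First, a Parseval-type identity reduces the variance to a mean square of the Dirichlet polynomial $F(s)=\sum_{m\sim X}\psi(m)m^{-s}$ over $T_0\le|t|\le X/h$. Here the range $|t|\le T_0$, with $T_0$ a small power of $h_0$, accounts for the main term given by the long average.

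Next, I would restrict to integers having a prime factor in each of a sequence of intervals $[P_j,Q_j]$, and apply Ramar\'e's identity to factor $F$ as $\sum_j Q_{v,H}(s)R_{v,H}(s)$. Condition (\ref{MR-condition}) guarantees that the $m\in\mathcal N$ lacking such prime factors are a proportion $\ll(\log P_1/\log Q_1)^{\alpha'}$ of $\mathcal N\cap[X,2X]$. This uses a sieve upper bound for sets of multiplicative density, and it is exactly where the lower bound on $\sum_{p\in\mathcal N}1/p$ enters. On each piece, one bounds large values of $Q_{v,H}$ by a large-values/mean-value theorem, and uses a Hal\'asz-type pointwise bound $\sup_{T_0\le|t|\le X}|F(1+it)|\ll\delta(\mathcal N;X)(\log X)^{-c}$ for the remaining set, as in the original Matom\"aki--Radziwi{\l\l} iteration.

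The main obstacle is that $\mathcal N$ may be sparse, with $\delta(\mathcal N;X)\to0$. Every estimate must therefore save relative to $\delta(\mathcal N;X)$ rather than relative to $1$; in particular the mean-value theorem alone would lose a factor $\delta^{-1}$. I would first try to handle this by a weighted Hal\'asz inequality adapted to $\mathcal N$, and by Shiu/sieve bounds of the form $\sum_{x\le m\le x+h,\,m\in\mathcal N}1\ll h\delta(\mathcal N;X)$. Both are available precisely under (\ref{MR-condition}), which is why the short intervals must have length $h_0\delta(\mathcal N;X)^{-1}$ rather than $h_0$.
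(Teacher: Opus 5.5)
The paper does not prove this proposition. It quotes it verbatim as \cite[Corollary 1.1]{Matomaki-Radziwill2020}, so your opening sentence is exactly the paper's treatment. The reconstruction you then sketch, however, would not establish the statement as written, and it already fails at the reduction step.

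A mean-square bound $\ll\delta(\mathcal N;X)^2(h_0^{-c}+(\log X)^{-c})$ gives, via Chebyshev at threshold $\delta\cdot\delta(\mathcal N;X)$, an exceptional set of size $\ll X\delta^{-2}(h_0^{-c}+(\log X)^{-c})$. The second term is not $\ll Xh_0^{-\delta^\kappa}$ once $\log h_0$ exceeds a large multiple of $\delta^{-\kappa}\log\log X$ (e.g.\ $h_0=X^{1/2}$).

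Worse, your own ingredients do not yield the power saving $h_0^{-c}$ at all. Consider the elements of $\mathcal N$ with no prime factor from $\mathcal N\cap[P_1,Q_1]$. They get no cancellation from the Ramar\'e/large-values/Hal\'asz machinery, so your outline can bound their contribution to the mean square only trivially, by about $\delta(\mathcal N;X)^2(\log P_1/\log Q_1)^{2\alpha'}$. Since $Q_1$ is at most the short-interval length in this method, this is at best a power of $1/\log h$. Chebyshev then gives an exceptional set of size about $X\delta^{-2}(\log h_0)^{-c}$, not a power saving in $h_0$.

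So the remark that the $\delta^\kappa$ ``comes from tracking the parameters'' hides the step that is genuinely missing. To reach $h_0^{-\delta^\kappa}$ you must do three things. First, take $P_1$ to be a $\delta$-dependent power of $h_0$, so that the part of $\mathcal N$ free of $\mathcal N$-primes in $[P_1,Q_1]$ has relative density below $\delta/2$. Second, control that part in short intervals by something other than the global variance. Third, apply the mean square and Chebyshev only to the remaining part, where you need a saving of a power of $P_1$ with no $(\log X)^{-c}$ loss.

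In the dense case $\delta(\mathcal N;X)\asymp 1$, the second step is an upper-bound sieve valid for every $x$. For sparse $\mathcal N$, a pointwise bound $\sum_{x\le m\le x+h,\,m\in\mathcal N}1\ll h\delta(\mathcal N;X)$ is not available for $h<X^{\eps}$ (compare the range in Lemma~\ref{Shiu}). Your appeal to Shiu-type bounds can therefore only be an averaged one, and it does not supply this step.

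As it stands, even granting the sparse Hal\'asz and mean-value inputs, your outline gives the proposition only with exceptional set $\ll X\delta^{-2}\bigl((\log h_0)^{-c}+(\log X)^{-c}\bigr)$. Incidentally, that weaker form would still suffice for how the proposition is used in the proof of Theorem~\ref{Main_theorem}, where only an $o(X)$ exceptional set is needed.
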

\noindent We also require a version of Wirsing's theorem for non-negative multiplicative functions \cite{Wirsing1967}.

\begin{prop}\label{Wirsing2}
Let $\phi:\N\longrightarrow[0,\infty)$ be a multiplicative function so that
\begin{enumerate}
\item $\text{sup}_p|\phi(p)|\leq A$ for some $A>0$;
\item there exists $\tau>0$ so that 
\[
\sum_{p\leq x}\frac{\phi(p)\log p}p=(\tau+o(1))\log x;
\]
\item \[
\sum_p\sum_{v\geq 2}\frac{\phi(p^v)}{p^v}<\infty.
\]
\end{enumerate}
Then we have  
\[
\sum_{m\leq x}\phi(m)=\left(\frac{e^{-\gamma\tau}}{\Gamma(\tau)}+o(1)\right)x\prod_{p\leq x}\left(1-\frac1p\right)\sum_{j=0}^\infty\frac{\phi(p^j)}{p^j}.
\]
\end{prop}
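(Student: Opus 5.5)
Since Proposition \ref{Wirsing2} is Wirsing's classical theorem \cite{Wirsing1967}, the plan is to reproduce Wirsing's elementary argument. Analytic methods of Selberg--Delange type are not available, because hypothesis (2) only controls the logarithmically weighted prime sum $\sum_{p\le x}\phi(p)\log p/p$, and gives no analytic continuation of $\sum\phi(m)m^{-s}$.

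Write $S(x)=\sum_{m\le x}\phi(m)$ and $P(x)=\prod_{p\le x}(1-1/p)\sum_{j\ge0}\phi(p^j)p^{-j}$.

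\textbf{Step 1 (logarithmic sums).} First I would show
\[
\sum_{m\le x}\frac{\phi(m)}m\sim\frac{e^{-\gamma\tau}}{\Gamma(\tau+1)}\cdot\frac{P(x)\log x}{e^{-\gamma}}\asymp(\log x)^{\tau}.
\]
The tools are the Euler product truncated at $x$, Mertens' theorem, and hypotheses (1) and (3), which let one discard prime powers and the tail $m>x$ with a Rankin-type trick. Using (2) and partial summation one also checks that $P(x)$ is regularly varying of index $\tau-1$ in $\log x$. Alongside this I would record the crude upper bound $S(x)\ll x P(x)$, again by Rankin's trick or a Hall--Tenenbaum/Shiu-type estimate; this uses only (1) and (3).

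\textbf{Step 2 (the Chebyshev-type identity).} Split $\log m=\sum_{p^v\| m}\log p^v$ to obtain
\[
S(x)\log x=\int_1^x\frac{S(t)}t\,\mathrm dt+\sum_{p\le x}\phi(p)\log p\; S\!\left(\frac xp\right)+E(x).
\]
The term $E(x)$ collects $v\ge2$ and the coprimality corrections. It is $O(xP(x))$ by (1), (3) and the upper bound from Step 1, so it is negligible against $S(x)\log x$. The first term is also $O(xP(x))$.

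\textbf{Step 3 (the main obstacle: the double sum).} The difficulty is to evaluate $\sum_p\phi(p)\log p\,S(x/p)$ when one only knows (2), the weighted average of $\phi(p)\log p$ against $1/p$. I would try the following swap first. Write $S(x/p)=(x/p)\,\rho(x/p)$ with $\rho(u)=S(u)/u$, so the sum becomes
\[
x\sum_p\frac{\phi(p)\log p}p\,\rho(x/p).
\]
This is exactly the measure that (2) controls, so it would follow from partial summation against $\tau\log$ provided $\rho$ is slowly oscillating in the sense that $\rho(u)/P(u)$ varies little on $[u,u^{1+\eta}]$. Establishing this regularity is Wirsing's key lemma. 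My approach would be to set $\Lambda=\limsup \rho(x)/P(x)$ and $\lambda=\liminf \rho(x)/P(x)$, both finite by Step 1. Inserting these into the identity then gives the integral inequalities
\[
\Lambda\le\tau\Lambda\int_0^1(\cdots)\,\mathrm du+o(1),\qquad \lambda\ge\tau\lambda\int_0^1(\cdots)\,\mathrm du-o(1),
\]
where the kernel is $(1-u)^{\tau-1}$ coming from $P(x^{1-u})/P(x)$. Combined with the exact logarithmic asymptotic from Step 1, which fixes an averaged value of $\rho/P$, these inequalities should force $\lambda=\Lambda$. The delicate point, and where I expect most of the work to lie, is making the iteration converge. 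This uses that the limiting integral equation $\rho(x)\log x=\tau\int_0^{\log x}\rho\,\mathrm d(\cdots)$ has a unique regularly varying solution.

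\textbf{Step 4 (identifying the constant).} Once $\rho(x)\sim cP(x)$ is known, partial summation converts it into a statement about $\sum_{m\le x}\phi(m)/m$. Comparing with Step 1 forces $c\,\Gamma(\tau+1)/\tau=e^{-\gamma\tau}\cdot(\text{Mertens normalisation})$, giving $c=e^{-\gamma\tau}/\Gamma(\tau)$, as claimed.
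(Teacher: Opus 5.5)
The paper gives no proof of this proposition; it cites Wirsing. Your outline therefore has to stand on its own, and it has a genuine gap exactly where you locate the difficulty.

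\textbf{Step 3 does not close.} The two inequalities you derive are tautological. Since $\tau\int_0^1(1-u)^{\tau-1}\,\mathrm du=1$, they read $\Lambda\le\Lambda+o(1)$ and $\lambda\ge\lambda-o(1)$.

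Adding the averaged information from Step 1 does not help. The profile $\rho(x)/P(x)\approx c\,(1+\tfrac12\sin\log x)$ satisfies both inequalities. It also has the right weighted average, because the oscillating part integrates to $o(1)$ against $(1-u)^{\tau-1}$ by Riemann--Lebesgue. Yet $\lambda<\Lambda$. Appealing to uniqueness of a regularly varying solution presupposes the regularity that is to be proved.

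No iteration is needed once that regularity is known. Replacing the measure $\phi(p)\log p/p$ by $\tau\,\mathrm dt/t$ turns your double sum into $\tau\int_1^xS(x/t)\,\mathrm dt=\tau x\sum_{m\le x}\phi(m)/m-\tau S(x)$ exactly, and Step 1 then finishes. So the whole content of the theorem is justifying this replacement. You must rule out correlation between $\rho(x/p)$ and the fluctuations of $\phi(p)$ that (2) permits on scales $o(\log x)$.

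Under the stronger hypothesis $\theta_\phi(y):=\sum_{p\le y}\phi(p)\log p\sim\tau y$, this is Wirsing's 1961 argument and needs no regularity. One simply swaps sums: $\sum_p\phi(p)\log p\,S(x/p)=\sum_{m\le x}\phi(m)\theta_\phi(x/m)=(\tau+o(1))\,x\sum_{m\le x}\phi(m)/m$, with the crude upper bound used only for $m$ near $x$. Under (2) alone, $\theta_\phi(y)/y$ need not converge. You then need an input that uses the arithmetic of $\phi$ beyond the limiting kernel. An example is that a non-negative $\phi$ with $\phi(p)\le A$ and (2) cannot correlate with $p^{it}$ for $t\ne0$; this is what excludes the log-periodic profile above. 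That is the substance of the 1967 result the paper cites, and your outline does not supply it.

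\textbf{Further points.} Step 1 is true, but its constant comes from a Tauberian theorem, not from Rankin's trick, which only bounds tails. Hardy--Littlewood--Karamata applied to $\sum_m\phi(m)m^{-1-\delta}$ does it, and this is where $\phi\ge0$ enters.

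In Step 4 you dropped the Mertens factor. With your own Step 1, $\sum_{m\le x}\phi(m)/m\sim e^{\gamma(1-\tau)}P(x)\log x/\Gamma(\tau+1)$. Comparing with $cP(x)\log x/\tau$ gives $c=e^{\gamma(1-\tau)}/\Gamma(\tau)$, not $e^{-\gamma\tau}/\Gamma(\tau)$. The displayed constant is in fact misnormalised. For $\phi\equiv1$ (so $\tau=1$) the left side is $\lfloor x\rfloor$ while the right side is $\sim e^{-\gamma}x$. Wirsing's theorem has $\frac{e^{-\gamma\tau}}{\Gamma(\tau)}\frac{x}{\log x}\prod_{p\le x}\sum_j\phi(p^j)p^{-j}$, and trading $1/\log x$ for $\prod_{p\le x}(1-1/p)$ costs a factor $e^{\gamma}$.

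Your crude bound $S(x)\ll xP(x)$ does not follow from (1) and (3) when $\tau<1$. Let $\phi(p)=\tau$ for all $p$ and $\phi(p^\nu)=0$ for $\nu\ge2$, except $\phi(q_k^2)=q_k^2/k^2$ for primes $q_k$ with $\log q_k\ge e^k$. Then (1)--(3) hold and $P(x)\asymp(\log x)^{\tau-1}$. But $S(q_k^2)\ge q_k^2/k^2$, so $S(x)/(xP(x))\to\infty$ along $x=q_k^2$. The proposition therefore fails as stated, even up to constants. A pointwise prime-power condition is needed, such as Wirsing's $\phi(p^\nu)\le B_1B_2^\nu$ with $B_2<2$.

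None of this affects the paper. It only needs lower bounds of the right order, and it applies the result to functions with $\phi(p^\nu)\le1$.
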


\noindent We also require a result complementing Wirsing's theorem that allows us to say that if
$\phi$ is a multiplicative function such that $|\phi(m)|$ satisfies the hypotheses of the previous proposition, then, provided $\phi$ oscillates sufficiently, we have $\sum_{m\leq x}\phi(m)=o(\sum_{m\leq x}|\phi(m)|)$. The following result of this kind is due to Tenenbaum \cite{Tenenbaum2017}.

\begin{prop}\label{Tenenbaum}
Let $T\geq 1$ and $\phi:\N\longrightarrow\C$ be a multiplicative function such that
\begin{enumerate}
\item $\text{sup}_p|\phi(p)|\leq A$ for some $A>0$;
\item \[
\sum_p\sum_{\nu\geq 2}\frac{|\phi(p^\nu)|(\log p^\nu)^2}{p^\nu}\leq B
\]
for some $B>0$;
\item Assume furthermore that there is a constant $\beta>0$ such that for any $2\leq y\leq x$ we have
\[
\sum_{y\leq p\leq x}\frac{|\phi(p)|}p\geq\beta\log\left(\frac{\log x}{\log y}\right)+O(1).
\] 
\end{enumerate} 
Then, as $x\longrightarrow\infty$,
\[
\left|\frac1x\sum_{m\leq x}\phi(m)\right|\ll_{A,B,\beta}\left(\frac1x\sum_{m\leq x}|\phi(m)|\right)\left(\frac{1+m_\phi(x;T)}{e^{m_\phi(x;T)}}+\frac1{\sqrt T}+\frac1{\log x}\right),
\]
where we have denoted
\[
m_\phi(x;T):=\min_{|t|\leq T}\sum_{p\leq x}\frac{|\phi(p)|-\Re(\phi(p)p^{-it})}p.
\]
\end{prop}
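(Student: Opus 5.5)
The plan is the Halász--Montgomery--Tenenbaum method, with the trivial comparison sum $\Phi(x):=\sum_{m\le x}|\phi(m)|$ used as the normalisation throughout. Write $F(s)=\sum_m\phi(m)m^{-s}$ and $G(s)=\sum_m|\phi(m)|m^{-s}$ for $\Re(s)>1$, and set $L=\log x$.

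First I would establish the size of the normaliser. Conditions (1) and (2) give, by Shiu-type upper bounds together with a Hall--Tenenbaum-type lower bound,
\[
\Phi(x)\asymp_{A,B}\frac{x}{\log x}\exp\Bigl(\sum_{p\le x}\frac{|\phi(p)|}{p}\Bigr).
\]
Condition (3) with $y=2$ makes the exponent at least of order $\beta\log L$, which excludes degenerate sparse cases. Condition (2) also gives
\[
G(1+\sigma)\asymp\exp\Bigl(\sum_{p\le e^{1/\sigma}}\frac{|\phi(p)|}{p}\Bigr),
\]
and, for $|\tau|\le T$,
\[
\frac{|F(1+\sigma+i\tau)|}{G(1+\sigma)}\ll\exp\Bigl(-\sum_{p}\frac{|\phi(p)|-\Re(\phi(p)p^{-i\tau})}{p^{1+\sigma}}\Bigr).
\]
In both, the prime powers contribute only $O_B(1)$ to the logarithm, after handling the finitely many $p$ with $\phi(p)$ close to $-p^{i\tau}$ in the usual way for Euler factors.

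Second, I would apply a weighted Halász inequality in the form due to Montgomery and Tenenbaum, obtained by writing $\phi\log=\phi*\Lambda_\phi$, using Perron's formula truncated at height $T$, and applying Parseval:
\[
\frac{1}{x}\Bigl|\sum_{m\le x}\phi(m)\Bigr|\ll\frac{1}{L}\int_{1/L}^{1}\max_{|\tau|\le T}\frac{|F(1+\sigma+i\tau)|}{\sigma}\,\mathrm d\sigma+\frac{\Phi(x)}{x}\Bigl(\frac{1}{\sqrt T}+\frac1L\Bigr).
\]
The $T^{-1/2}$ term is the truncation error, bounded via Cauchy--Schwarz against $G$. The bound $\sup_p|\phi(p)|\le A$ controls the Dirichlet series of $\Lambda_\phi$ by that of $G'/G$.

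Third, I would insert the ratio bound and compare the two $\sigma$-dependent quantities with $m_\phi(x;T)$. Put $u=\sigma L\in[1,L]$. Truncating the prime sum at $e^{1/\sigma}$ rather than at $x$ changes the exponent by at most
\[
2\sum_{e^{1/\sigma}<p\le x}\frac{|\phi(p)|}{p}\le 2A\log u+O(1).
\]
On the other hand, condition (3) gives
\[
\frac{G(1+\sigma)}{L\,\Phi(x)/x}\ll\exp\Bigl(-\sum_{e^{1/\sigma}<p\le x}\frac{|\phi(p)|}p\Bigr)\ll u^{-\beta}.
\]
Hence the integrand is at most $(\Phi(x)L/x)\,u^{-1-\beta}\min\{1,e^{-m}u^{2A}\}$ with $m=m_\phi(x;T)$. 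Integrating in $u$ gives a bound of the shape $\Phi(x)e^{-c m}/x$.

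The main obstacle is this last step. The crude $\min$-bound yields only $e^{-\beta m/(2A+\beta)}$, not the claimed $(1+m)e^{-m}$. To sharpen it I would not bound $\sum_{e^{1/\sigma}<p\le x}(|\phi(p)|-\Re\,\phi(p)p^{-i\tau})/p$ trivially. Instead I would keep the exact identity
\[
\frac{|F(1+\sigma+i\tau)|}{G(1+\sigma)}\cdot\frac{G(1+\sigma)}{L\Phi(x)/x}\approx\exp\Bigl(-\sum_{p\le x}\frac{|\phi(p)|-\Re(\phi(p)p^{-i\tau})}{p^{1+\sigma}}-\sum_{e^{1/\sigma}<p\le x}\frac{\Re(\phi(p)p^{-i\tau})}{p}\Bigr),
\]
and use $\Re(\phi(p)p^{-i\tau})\ge-|\phi(p)|$ only in combination with the $|\phi(p)|$ mass already present. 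This follows Tenenbaum's treatment, which works with $\int(\cdots)\,\mathrm d\sigma/\sigma$ and a convexity argument in $\sigma$. The $(1+m)$ factor is then the familiar loss from integrating $e^{-m}$ over a logarithmic range of length of order $m$. This refinement is where I expect the real work to be.
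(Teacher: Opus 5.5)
The paper gives no proof of this proposition; it quotes it from \cite{Tenenbaum2017}. Your outline therefore has to stand on its own, and granting your Steps 1--2, it has a genuine gap exactly at the step you flag: the factor $(1+m)e^{-m}$ is never proved, and the fix you sketch cannot produce it.

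All your comparisons are made at a \emph{fixed} $\tau$. You bound the prime sum at $(\sigma,\tau)$ by the same sum over $p\le x$ at the same $\tau$, corrected by what the primes in $(e^{1/\sigma},x]$ can contribute, and only then use $m_\phi(x;T)\le\sum_{p\le x}(|\phi(p)|-\Re(\phi(p)p^{-i\tau}))/p$. Using $-\Re(\phi(p)p^{-i\tau})\le|\phi(p)|$ "together with the mass already present" gives at best the exponent $-\max\{m-S,S\}$, where $S=\sum_{e^{1/\sigma}<p\le x}|\phi(p)|/p$. After integration this is about $(1+m)e^{-m/2}$.

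This is not a bookkeeping issue, because the fixed-$\tau$ inequality you would need,
\[
|F(1+\sigma+i\tau)|\ll\frac{L\Phi(x)}{x}\exp\Bigl(-\sum_{p\le x}\frac{|\phi(p)|-\Re(\phi(p)p^{-i\tau})}{p}\Bigr),
\]
is false. Take $\phi$ completely multiplicative with $\phi(p)=1$ for $p\le y$ and $\phi(p)=-1$ for $p>y$, so (1)--(3) hold with $\beta=1$. Put $v=\log x/\log y$, $\tau=0$ and $\sigma=1/\log y$. Then $F(1+\sigma)\asymp\log y$, while the right-hand side is $\asymp Lv^{-2}=v^{-1}\log y$. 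The proposition survives only because $m_\phi(x;T)=\log v+O(1)$ is attained at $t\approx1/\log y\neq0$. So the minimum over $t$ must be used essentially, and no convexity argument in $\sigma$ at fixed $\tau$ can supply this. Separately, your displayed ``exact identity'' is correct only if the first sum carries the weight $1/p$ rather than $p^{-1-\sigma}$.

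The missing ingredient is the uniform bound
\[
\max_{|\tau|\le T}|F(1+\sigma+i\tau)|\ll_{A,B}\frac{L\Phi(x)}{x}\,e^{-m_\phi(x;2T)}\qquad(1/L\le\sigma\le 1).
\]
This says that $\max_{|\tau|\le T}\Re\sum_{p\le z}\phi(p)p^{-i\tau}/p$ is essentially increasing in $z$, up to enlarging $T$ to $2T$. To prove it, set $z=e^{1/\sigma}$ and average $\Re\sum_{p\le x}\phi(p)p^{-i(\tau+\delta)}/p$ over $\delta$ against $(\log z)K(\delta\log z)$. Here $K\ge 0$ is smooth, even, of mass $1$ and supported in $[-1,1]$.

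The prime $p$ then receives the weight $\hat K(\log p/\log z)$:
\begin{itemize}
\item for $p\le z$ this weight is $1+O((\log p/\log z)^2)$;
\item for $p>z$ it is $O((\log z/\log p)^2)$.
\end{itemize}
Since $|\phi(p)|\le A$, the average equals $\Re\sum_{p\le z}\phi(p)p^{-i\tau}/p+O(A)$. Hence some $t=\tau+\delta$ with $|t|\le T+1\le 2T$ does at least as well, and combined with $\log|F(s)|\le\Re\sum_p\phi(p)p^{-s}+O_{A,B}(1)$ and your lower bound for $\Phi$, this gives the display.

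Now combine it with $|F(1+\sigma+i\tau)|\le G(1+\sigma)\ll(L\Phi(x)/x)(\sigma L)^{-\beta}$, which follows from (3) and your Step 1. The Halász integral is then
\[
\ll\frac{\Phi(x)}{x}\int_1^L\min\{u^{-\beta},e^{-m}\}\,\frac{\mathrm d u}{u}\ll_\beta\frac{\Phi(x)}{x}(1+m)e^{-m}.
\]
The factor $1+m$ comes from the range $1\le u\le e^{m/\beta}$, where the bound $e^{-m}$ is integrated against $\mathrm d u/u$. Running your Step 2 with $T/2$ in place of $T$ then gives the statement with $m_\phi(x;T)$; the case $T<2$ is trivial.
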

\noindent To estimate the quantity $m_\phi(x;T)$ we will require the following special case of \cite[Lemma 5.1]{Matomaki-Radziwill2020}.

\begin{lemma}\label{pretentious}
Let $\phi:\N\longrightarrow[-1,1]$ be a real-valued multiplicative function that satisfies the estimate
\[
\sum_{z<p\leq w}\frac{|\phi(p)|}p\geq\alpha\sum_{z<p\leq w}\frac1p-O\left(\frac1{\log z}\right)
\]
uniformly in $2\leq z<w$ for some $\alpha>0$. Let $\rho_\alpha:=(\alpha/3)-(2/3\pi)\sin(\pi\alpha/2)$. Then for any $|t|\leq X$ and any $0<\rho<\rho_\alpha$ we have
\[
\sum_{p\leq x}\frac{|\phi(p)|-\Re(\phi(p)p^{-it})}p\geq\rho\min\left\{\log\log x,3\log(1+|t|\log x)\right\}.
\]

\end{lemma}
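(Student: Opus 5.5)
The plan is to reduce the statement to a pointwise inequality prime by prime, and then to an extremal (rearrangement) problem on blocks of primes where $p^{it}$ is equidistributed.

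\textbf{Pointwise reduction.} Since $\phi$ is real, $\Re(\phi(p)p^{-it})=\phi(p)\cos(t\log p)$. Hence
\[
|\phi(p)|-\Re(\phi(p)p^{-it})\geq |\phi(p)|\bigl(1-|\cos(t\log p)|\bigr)=:|\phi(p)|\,g_t(p),
\]
and it suffices to bound $\sum_{p\le x}|\phi(p)|g_t(p)/p$ from below. Because this sum is non-negative, I may discard any range of primes I like. I would keep only the primes in $(y,x]$, where
\[
\log y=\max\bigl\{(\log(3+|t|))^{2/3+\eps},\ C/|t|\bigr\}
\]
for a large constant $C$.

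\textbf{Equidistribution on blocks.} On this range I would cut $(y,x]$ into blocks $(z,w]$ with $\log\log w-\log\log z$ a small fixed constant $\eta$, so that $t\log p$ sweeps through many periods of $\pi$ inside each block. By the prime number theorem with Vinogradov--Korobov error term, $\sum_{z<p\le w}p^{-iu}/p$ is $o\bigl(\sum_{z<p\le w}1/p\bigr)$ uniformly for $u\in\{t,2t,\dots\}$ up to a bounded multiple, since $\log z\geq(\log|t|)^{2/3+\eps}$ and $|t|\log z\geq C$. By Weyl's criterion applied to a trigonometric approximation, the measure $\sum_{z<p\le w}\delta_{t\log p \bmod \pi}/p$, normalised by its mass, is then close to the uniform measure on $\R/\pi\Z$.

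\textbf{Rearrangement.} On each block, the hypothesis gives $\sum|\phi(p)|/p\geq\alpha\sum 1/p-O(1/\log z)$ with $0\le|\phi(p)|\le1$. The quantity $\sum|\phi(p)|g_t(p)/p$ is minimised under these constraints by putting the mass $|\phi(p)|=1$ on the primes where $g_t$ is smallest, namely where $t\log p$ lies within $\pi\alpha/2$ of $\pi\Z$. Using the equidistribution, the block contributes at least
\[
\Bigl(\tfrac1\pi\int_{-\pi\alpha/2}^{\pi\alpha/2}(1-\cos\theta)\,\mathrm d\theta-o(1)\Bigr)\sum_{z<p\le w}\frac1p-O\Bigl(\frac1{\log z}\Bigr)=\Bigl(\alpha-\tfrac2\pi\sin\tfrac{\pi\alpha}2-o(1)\Bigr)\eta-O\Bigl(\frac1{\log z}\Bigr).
\]
Summing over the blocks, the errors $O(1/\log z)$ add up to $O(1)$. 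So the total is at least $(\alpha-\frac2\pi\sin\frac{\pi\alpha}2-\eps)(\log\log x-\log\log y)-O(1)$.

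\textbf{Bookkeeping of the range.} I expect this step to be the main obstacle, since it is where the factor $1/3$ in $\rho_\alpha$ comes from. There are two cases.
\begin{itemize}
\item If $|t|\ge1$ and $|t|\le X\le x$, then $\log\log y\le(2/3+\eps)\log\log x$. Hence $\log\log x-\log\log y\geq(1/3-\eps)\log\log x$.
\item If $|t|<1$, then $\log\log x-\log\log y=\log(|t|\log x)-\log C$. When $|t|\log x\geq1$ this is comparable to $\log(1+|t|\log x)$; when $|t|\log x$ is bounded the claim is trivial after adjusting constants.
\end{itemize}
In each case $\log\log x-\log\log y\geq\frac13\min\{\log\log x,\,3\log(1+|t|\log x)\}-O(1)$.

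Multiplying by the block constant and absorbing the $O(1)$ terms by taking any $\rho<\rho_\alpha$ gives the lemma. This uses that $\min\{\cdots\}\to\infty$ in the non-trivial cases; in the bounded case the right-hand side is $O(1)$, so the claim holds after adjusting constants.
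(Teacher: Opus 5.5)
The paper gives no proof of this lemma; it is quoted from \cite[Lemma 5.1]{Matomaki-Radziwill2020}. Your argument is the natural proof and recovers exactly $\rho_\alpha$. It runs: reduce to $|\phi(p)|(1-|\cos(t\log p)|)$, use equidistribution of $t\log p \bmod \pi$ once $\log p\geq(\log|t|)^{2/3+\eps}$ and $|t|\log p$ is large, take the bathtub extremiser to get $\alpha-\frac2\pi\sin\frac{\pi\alpha}2$, and lose a factor $\frac13$ to the discarded range. You are right to read the hypothesis as $|t|\leq x$ (e.g.\ $x=X$). This is genuinely needed: for $x$ far below $X$ the statement fails, by Dirichlet's simultaneous approximation applied to $\{\log p/\pi\}_{p\leq x}$.

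Your last step, however, cannot be carried out for the statement as written. There $\rho$ is fixed and there is no additive constant. So when $\min\{\log\log x,3\log(1+|t|\log x)\}$ stays bounded there is nothing to ``adjust''. In fact the inequality is false in that regime. Take $\phi\equiv1$ (so $\alpha=1$ and $\rho_1=\frac13-\frac2{3\pi}\approx0.1211$), $\rho=0.12$ and $t=1/\log x$. The left-hand side tends to $\int_0^1\frac{1-\cos s}{s}\,\mathrm d s\approx0.2398$, while the right-hand side equals $3\rho\log2\approx0.2495$. What your argument actually proves is the bound with an extra $-O_\rho(1)$ on the right. That is presumably the intended form, and it is all the paper needs. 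The paper only uses $m_g(X;T)\gg\log\log X$ for $(\log X)^{-1/2}\leq|t|\leq X$, where the minimum equals $\log\log X$ and the $O(1)$ is absorbed.

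Two smaller points.

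First, the input for the twisted sums must be the Vinogradov--Korobov zero-free region for $\zeta$ near $1+iu$. This gives $\sum_{z<p\leq w}p^{-1-iu}\ll\frac1{|u|\log z}+\exp(-c(\log z)^{\delta})$ for $\log z\geq(\log|u|)^{2/3+\eps}$. Partial summation from the untwisted prime number theorem loses a factor $|u|$ and only works for $\log z\geq(\log|t|)^{5/3+\eps}$, which empties the range when $|t|\asymp x$. Relative to a block of mass $\asymp\eta$, the bound above is $O(1/(\eta C))$ rather than $o(1)$, so $C$ must be chosen large in terms of $\eta$ and the target accuracy.

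Second, you can dispense with both the blocks and the rearrangement. With $\lambda=1-\cos\frac{\pi\alpha}2$ one has $|\phi(p)|g_t(p)\geq\lambda|\phi(p)|-(\lambda-g_t(p))_+$. So one application of the hypothesis on $(y,x]$, plus equidistribution against the single continuous function $(|\cos\theta|-\cos\frac{\pi\alpha}2)_+$, gives the constant $\lambda\alpha-\frac1\pi\int_{-\pi/2}^{\pi/2}(\cos\theta-\cos\frac{\pi\alpha}2)_+\,\mathrm d\theta=\alpha-\frac2\pi\sin\frac{\pi\alpha}2$ directly.
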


\noindent The final tool from the theory of multiplicative functions is Shiu's bound \cite{Shiu1980}. 

\begin{lemma}\label{Shiu}
Let $\phi:\N\longrightarrow\C$ be a multiplicative function satisfying $|\phi(m)|\leq d_B(m)^C$ for all $m\leq X$. Let $\sqrt X\leq Y\leq X$, $\delta\in(0,1)$, and let $Y^\delta\leq y\leq Y$. Then
\[
\sum_{Y\leq m\leq Y+y}|\phi(m)|\ll_{B,C,\delta}y\mathcal P_\phi(X),
\]
where 
\[
\mathcal P_\phi(X):=\prod_{p\leq X}\left(1+\frac{|\phi(p)|-1}p\right).
\]
\end{lemma}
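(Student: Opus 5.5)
The approach is Shiu's original argument, a Brun--Titchmarsh type bound for multiplicative functions. First the normalisations. Since $|\phi(p)|\leq d_B(p)^C=B^C$, the values at primes are bounded. Since $Y\geq\sqrt X$ and $y\geq Y^\delta$, we have $\log y\asymp_\delta\log X$. Hence $\prod_{y<p\leq X}(1+(|\phi(p)|-1)/p)\asymp_{B,C,\delta}1$, and it suffices to prove the bound with $\mathcal P_\phi(y)$ in place of $\mathcal P_\phi(X)$. Set $z:=y^{1/2}$; the target is
\[
\sum_{Y\leq m\leq Y+y}|\phi(m)|\ll\frac y{\log z}\sum_{a\leq z}\frac{|\phi(a)|}a\ll y\,\mathcal P_\phi(y),
\]
where the last step is the routine Euler product bound $\sum_{a\leq z}|\phi(a)|/a\leq\prod_{p\leq z}\sum_{j\geq0}|\phi(p^j)|p^{-j}$. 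For this step the higher prime powers must be controlled using $d_B(p^j)^C\ll_\eps p^{j\eps}$.

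The decomposition is as follows. Write each $m\in[Y,Y+y]$ as $m=p_1^{e_1}\cdots p_r^{e_r}$ with $p_1<\dots<p_r$. Let $a=p_1^{e_1}\cdots p_j^{e_j}$, with $j$ the largest index such that this product is $\leq z$, and put $b=m/a$, so $(a,b)=1$ and $P^-(b)>P^+(a)$. I would split into four cases.
\begin{enumerate}
\item[(I)] $P^-(b)>z^{1/2}$. Then $b$ has $\ll 1$ prime factors, because $b\leq 3X$ and $\log X\asymp\log z$. Hence $|\phi(m)|\leq|\phi(a)|\,d_B(b)^C\ll|\phi(a)|$. The number of $b\in[Y/a,(Y+y)/a]$ free of primes $\leq z^{1/2}$ is $\ll (y/a)/\log z$ by the Brun (or Selberg) upper bound sieve, which applies since $y/a\geq z$. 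Summing over $a$ gives the target.
\item[(II)] $P^-(b)\leq z^{1/2}$ and $a\leq z^{1/2}$. Then the next prime power $p_{j+1}^{e_{j+1}}$ exceeds $z^{1/2}$ while $p_{j+1}\leq z^{1/2}$, so $m$ is divisible by $p^e>z^{1/2}$ with $p\leq z^{1/2}$ and $e\geq2$. The count of such $m$ is $\ll y\sum_{p\leq z^{1/2}}p^{-\max(2,\lceil\log z/(2\log p)\rceil)}$, which saves a power of $z$. Combined with the crude bound $|\phi(m)|\leq d_B(m)^C\ll X^\eps$, this is negligible.
\item[(III)] $z^{1/2}<a\leq z$ and $P^+(a)\leq\log X$ (say). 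Such $a$ are very smooth, and the number of them is $z^{o(1)}$. Bounding $\sum_{b\ll X/a} d_B(b)^C$ crudely over the interval, via $|\phi(ab)|\ll X^{\eps}$ and the count $y/a+1$, again gives a power saving.
\item[(IV)] $z^{1/2}<a\leq z$ and $\log X<P^+(a)=:q\leq z^{1/2}$. Here $b$ has all prime factors $>q$, and so $\Omega(b)\ll\log X/\log q$. Together with $d_B(b)^C\leq B^{C\Omega(b)}$ this gives $|\phi(m)|\leq|\phi(a)|B^{C\Omega(b)}$. The sieve count for $b$ is $\ll (y/a)/\log q$. Sorting by $s=\lfloor\log z/\log q\rfloor$, I would apply Rankin's trick, i.e.\ insert $(a/z^{1/2})^{\sigma}$ with $\sigma\asymp 1/\log q$, to the sum $\sum_{z^{1/2}<a\leq z,\,P^+(a)\leq q}|\phi(a)|/a$. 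This yields a factor $e^{-cs\log s}$ relative to $\sum_{a}|\phi(a)|/a$, which beats both $B^{Cs}$ and the loss $\log z/\log q\asymp s$.
\end{enumerate}

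The main obstacle is case (IV). There the sieve only saves $\log q$ rather than $\log z$, and $|\phi|$ at $b$ can be as large as $B^{C\Omega(b)}$; one has to check that the smooth-number saving from Rankin's trick is strong enough to compensate uniformly over all $q$. The uniformity of the sieve in case (I) also needs care, since the interval length $y/a\geq z$ is only a power of the sifting level. Since the lemma is quoted from \cite{Shiu1980}, in the paper one would simply cite it, but the above is the route to verify it.
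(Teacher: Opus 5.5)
Your outline is correct and is exactly Shiu's four-class argument; the paper gives no proof of its own and simply cites \cite{Shiu1980}. Two small points. In case (IV), $\sigma=\kappa/\log q$ yields only a saving $e^{-\kappa s/2}$, not $e^{-cs\log s}$; that is still enough once $\kappa$ is chosen large in terms of $B,C,\delta$, and if you want the $e^{-cs\log s}$ saving you should take $\sigma=c\log s/\log q$ with $c<1$. Also, you implicitly use $|\phi(m)|\le d_B(m)^C$ for $m$ up to $2X$. The hypothesis, stated only for $m\le X$, has to be read that way anyway for the lemma to hold.
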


\section{Proof of Theorem \ref{Main_theorem}}
\noindent Put $g(m):=\text{sgn}(A_f(m))$ and $r(m):=|g(m)|=1_{A_f(m)\neq 0}$. Clearly both $g$ and $r$ are multiplicative functions, and both vanish if $A_f(m)=0$. Let
\[
\mathcal R_f(x;H):=\sum_{x\leq m\leq x+H}r(m)=\sum_{\substack{x\leq m\leq x+H\\A_f(m)\neq 0}}1\qquad\text{and}\qquad\mathcal G_f(x;H):=\sum_{x\leq m\leq x+H}g(m)=\sum_{x\leq m\leq x+H}\text{sgn}(A_f(m)).
\]
The goal is to relate these short averages to averages over dyadic intervals $[X,2X]$. Set $h_0:=H\Delta_f(X)$ and let $0<\eta<1/1000$ be a parameter chosen later. Applying Proposition \ref{MR} with the choices $\phi=1$ and $\phi=g$ (with $\mathcal N=\{m\in\N:\,A_f(m)\neq 0\}$) gives
\begin{align}\label{Rshort}
\mathcal R_f(X,H)=\frac HX\sum_{m\sim X}r(m)+O(\eta H\Delta_f(X))
\end{align}
and
\begin{align}\label{Gshort}
\mathcal G_f(X,H)=\frac HX\sum_{m\sim X}g(m)+O(\eta H\Delta_f(X))
\end{align}
simultaneously outside of a subset of $x\in[X,2X]$ with size $\ll Xh_0^{-\eta^\kappa}$ for some absolute $\kappa>0$. To see that the condition (\ref{MR-condition}) holds in this context, note that under Generalised Ramanujan--Petersson Conjecture we have a simple bound 
\[
1_{\{A_f(p)\neq 0\}}\geq\frac{|A_f(p)|^2}{n^2}.
\]
Using this together with (\ref{uniformRS}) gives the estimate
\begin{align}\label{MR_cond}
\sum_{\substack{z<p\leq w\\ A_f(p)\neq 0}}\frac1p\geq\frac1{n^2}\sum_{z<p\leq w}\frac1p-O_f\left(\frac1{\log z}\right)
\end{align}
needed.

Choose $\eta=(\log h_0)^{-1/2\kappa}$. Note that $0<\eta<1/1000$ for sufficiently large $X$ as $h_0\longrightarrow\infty$ along with $X$. With this choice we have that $h_0^{-\eta^\kappa}=\exp(-\sqrt{\log h_0})$. Thus the size of the exceptional set is $\ll_f X\exp(-\sqrt{\log h_0})=o_f(X)$. Now to complete the proof it suffices to show that, under Generalised Ramanujan--Petersson Conjecture,
\begin{enumerate}
\item \[
\sum_{m\sim X}r(m)\asymp_f X\Delta_f(X);
\]
\item \[
\sum_{m\sim X}g(m)=o_f(X\Delta_f(X)).
\]
\end{enumerate}
Indeed, under these estimates it follows from (\ref{Rshort}) and (\ref{Gshort}) that outside of a subset of $[X,2X]$ of size $o_f(X)$ we simultaneously have 
\[
\mathcal R_f(x;H)\asymp_f H\Delta_f(X) \qquad \text{and} \qquad \mathcal G_f(x;H)=o_f(H\Delta_f(X)). 
\]
This finishes the proof as  
\[
\mathcal N_f^\pm(x+H)-\mathcal N_f^\pm(x)=\frac12\left(\mathcal R_f(x;H)\pm\mathcal G_f(x;H)\right).
\]
To see that (1) holds, by Proposition \ref{Wirsing2} and a straightforward computation we have  
\[
\sum_{m\sim X}r(m)\asymp_f X\prod_{p\leq X}\left(1+\frac{r(p)-1}p\right)
\]
as $r(p)\in\{0,1\}$.

But 
\begin{align*}
r(p)-1=\begin{cases}
0 & \text{if }A_f(p)\neq 0 \\
-1 & \text{if }A_f(p)=0
\end{cases}
\end{align*}
and so
\[
\prod_{p\leq X}\left(1+\frac{r(p)-1}p\right)=\Delta_f(X). 
\]
This gives (1).

The argument to establish (2) is slightly more elaborate. For an application of Proposition \ref{Tenenbaum} we require a lower bound for the quantity $m_g(X;T)$. For this we need to obtain a lower bound for 
\[
\sum_{\substack{z<p\leq w\\A_f(p)<0}}\frac1p
\]
uniformly in $2\leq z<w$.

As the underlying form satisfies the Ramanujan--Petersson Conjecture we have that $|A_f(p)|\leq n$ and so $|A_f(p)|\geq |A_f(p)|^2/n$. Consequently, using (\ref{uniformRS}), 
\begin{align}\label{est-1}
\sum_{z<p\leq w}\frac{|A_f(p)|}p&\geq\frac1n\sum_{z<p\leq w}\frac{|A_f(p)|^2}p \nonumber\\
&=\frac1n\sum_{z<p\leq w}\frac1p+O\left(\frac1{\log z}\right).
\end{align}
Thus, we have
\begin{align*}
\sum_{\substack{z<p\leq w\\A_f(p)>0}}\frac{A_f(p)}p+\sum_{\substack{z<p\leq w\\A_f(p)<0}}\frac{|A_f(p)|}p&=\sum_{z<p\leq w}\frac{|A_f(p)|}p\\
&\geq\frac1n\sum_{z<p\leq w}\frac1p-O_f\left(\frac1{\log z}\right).
\end{align*}
Using (\ref{uniformbound}) we have 
\[
\sum_{\substack{z<p\leq w\\A_f(p)>0}}\frac{A_f(p)}p-\sum_{\substack{z<p\leq w\\A_f(p)<0}}\frac{|A_f(p)|}p=O_f\left(\frac1{\log z}\right).
\]
Combining these estimates we get
\[
\min\left\{\sum_{\substack{z<p\leq w\\A_f(p)>0}}\frac{A_f(p)}p,\sum_{\substack{z<p\leq w\\A_f(p)<0}}\frac{|A_f(p)|}p\right\}\geq\frac1{2n}\sum_{z<p\leq w}\frac1p-O_f\left(\frac1{\log z}\right).
\]
As $|A_f(p)|\leq n$ under Generalised Ramanujan--Petersson Conjecture, we conclude the estimates
\begin{align}\label{uniform_lb2}
\sum_{\substack{z<p\leq w\\A_f(p)>0}}\frac1p\geq\frac1{2n^2}\sum_{z<p\leq w}\frac1p-O_f\left(\frac1{\log z}\right)
\end{align}
and
\begin{align}\label{uniform_lb}
\sum_{\substack{z<p\leq w\\A_f(p)<0}}\frac1p\geq\frac1{2n^2}\sum_{z<p\leq w}\frac1p-O_f\left(\frac1{\log z}\right)
\end{align}
uniformly for $2\leq z<w$.

For $T\geq 1$ recall that we have defined
\[
m_g(X;T)=\min_{|t|\leq T}\sum_{p\leq X}\frac{|g(p)|-\Re(g(p)p^{-it})}p.
\]
We claim that $m_g(X;T)\gg \log\log X$ uniformly in $0<T\leq X$. This can be deduced in an elementary manner using Lemma \ref{pretentious}. Indeed, first observe, as $g$ is real-valued, that we have  
\begin{align*}
|g(p)|-\Re(g(p)p^{-it})=\begin{cases}
1-\cos(t\log p) & \text{if }A_f(p)>0 \\
1+\cos(t\log p) & \text{if }A_f(p)<0 \\
0 & \text{if }A_f(p)=0.
\end{cases}
\end{align*}
In the range $(\log X)^{-1/2}\leq |t|\leq X$ we trivially have $3\log(1+|t|\log X)\geq(3/2+o(1))\log\log X$, and the claimed lower bound follows from Lemma \ref{pretentious} (which is applicable by (\ref{uniform_lb2}) and (\ref{uniform_lb})) at once. To treat the range $|t|<(\log X)^{-1/2}$ we define the parameter $Y:=\exp(\sqrt{\log X}/4)<X$ and observe that for every $p\leq Y$ we have $|t|\log p\leq 1/4$. Thus, for any such prime we have $1+\cos(t\log p)\geq 1+\cos(-1/4)>0$. Now estimating trivially using non-negativity we have
\begin{align*}
\sum_{p\leq X}\frac{|g(p)|-\Re(g(p)p^{-it})}p &\geq\sum_{\substack{p\leq Y\\ A_f(p)<0}}\frac{|g(p)|-\Re(g(p)p^{-it})}p\\
&\geq\left(1+\cos\left(-\frac14\right)\right)\sum_{\substack{p\leq Y\\ A_f(p)<0}}\frac1p.
\end{align*}
We again obtain the required lower bound using (\ref{uniform_lb}) and observing that $\log\log Y\asymp\log\log X$. This completes the proof of the estimate $m_g(X;T)\gg\log\log X$. 

Next we apply Tenenbaum's result (Proposition \ref{Tenenbaum}) with the choice $T=(\log X)^4$. Condition (3) in Proposition \ref{Tenenbaum} is fulfilled by (\ref{MR_cond}). The other two conditions hold obviously as $g(m)\in\{0,1\}$. Using the previous observations, and the trivial upper bound $m_g(X;T)\ll\log\log X$, we have the estimate
\[
\frac{1+m_g(X;T)}{e^{m_g(X;T)}}\ll\frac{\log\log X}{e^{c_n\log\log X}} 
\]
for some constant $c_n$ depending only on $n$. Thus
\begin{align*}
\left|\sum_{m\leq X}g(m)\right|&\ll \left(\sum_{m\leq X}r(m)\right)\left(\frac{\log\log X}{(\log X)^{c_n}}+\frac1{\log X}\right) \\
&=o\left(\sum_{m\leq X}r(m)\right).
\end{align*}
Given (1), this completes the proof of (2), and thus also the proof of Theorem \ref{Main_theorem}. \qed

We finish this section by showing that Generalised Ramanujan--Petersson Conjecture implies $\Delta_f(X)\gg_f (\log X)^{-1+1/n^2}$. Indeed, from (\ref{MR_cond}) we have
\[
\sum_{\substack{p\leq X\\A_f(p)=0}}\frac1p\leq\left(1-\frac1{n^2}\right)\log\log X+O_f(1). 
\]
Therefore
\begin{align*}
\log\Delta_f(X)&=\sum_{\substack{p\leq X\\A_f(p)=0}}\log\left(1-\frac1p\right) \\
&=-\sum_{\substack{p\leq X\\A_f(p)=0}}\frac1p+O(1)\\
&\geq -\left(1-\frac1{n^2}\right)\log\log X+O_f(1),
\end{align*}
which gives the desired estimate
\[
\Delta_f(X)\gg_f (\log X)^{-1+1/n^2}. 
\]

\section{Proof of Theorem \ref{secondMain_theorem}}

\noindent As in the proof of Theorem \ref{Main_theorem}, it suffices to show that 
\[
\sum_{m\sim X}r(m)\asymp_f X\Delta_f(X) \qquad \text{and} \qquad \sum_{m\sim X}g(m)=o_f\left(X\Delta_f(X)\right),
\]
where $g(m)$ and $r(m)$ are as in that proof.

Again the latter bound follows from Proposition \ref{Tenenbaum} provided that the function $g$ is sufficiently non-pretentious. 

In the proof of Theorem \ref{Main_theorem} Generalised Ramanujan--Petersson Conjecture was used to verify conditions needed for applying Propositions \ref{MR}, \ref{Wirsing2}, \ref{Tenenbaum}, and estimating the quantity $m_g(X;T)$. We begin with a general result showing that these conditions can be verified with slightly weaker assumptions and afterwards justify them for $\mathrm{GL_2}$ and $\mathrm{GL}_3$ Hecke--Maass cusp forms unconditionally. 

Let us assume that the following estimates hold uniformly in $2\leq z<w$:
\begin{enumerate}
\item[] 
\begin{align}\label{cond1}
\sum_{z<p\leq w}\frac{|A_f(p)|^2}p=\sum_{z<p\leq w}\frac 1p+O_f\left(\frac1{\log z}\right);
\end{align}
\item[]
\begin{align}\label{cond2}
\sum_{z<p\leq w}\frac{A_f(p)}p=O_f\left(\frac1{\log z}\right);
\end{align}
\item[]
\begin{align}\label{cond3}
\sum_{z<p\leq w}\frac{|A_f(p)|^4}p\leq C_4\sum_{z<p\leq w}\frac 1p+O_f\left(\frac1{\log z}\right)
\end{align}
\end{enumerate}
for some absolute constant $C_4>0$. 

By the Cauchy--Schwarz inequality we have
\[
\left(\sum_{z<p\leq w}\frac{|A_f(p)|^2}p\right)^2\leq\left(\sum_{\substack{z<p\leq w\\ A_f(p)\neq 0}}\frac 1p\right)\left(\sum_{z<p\leq w}\frac{|A_f(p)|^4}p\right).
\]
Combining this with the assumptions (\ref{cond1}) and (\ref{cond3}) gives
\begin{align}\label{MR_cond2}
\sum_{\substack{z<p\leq w\\ A_f(p)\neq 0}}\frac 1p\geq\frac1{C_4}\sum_{z<p\leq w}\frac1p-O_f\left(\frac1{\log z}\right).
\end{align}
As noted in \cite[(13)]{Matomaki-Radziwill2020}, such an estimate implies that 
\[
\sum_{m\sim X}r(m)\asymp_{f,C_4}X\Delta_f(X)
\]
with $r(m)=1_{A_f(m)\neq 0}$. Note also that arguing as in the end of Section $6$, (\ref{MR_cond2}) leads to the bound $\Delta_f(X)\gg_f (\log X)^{-1+1/C_4}$. Now it is enough to show that the conditions (\ref{cond1})-(\ref{cond3}) imply
\begin{align}\label{g-est}
\sum_{m\sim X}g(m)=o_f\left(X\Delta_f(X)\right).
\end{align}
We need to verify that both inequalities $A_f(p)>0$ and $A_f(p)<0$ occur with sufficiently many primes $p$. By interpolation we have
\[
\left(\sum_{z<p\leq w}\frac{|A_f(p)|^2}p\right)^3\leq\left(\sum_{z<p\leq w}\frac{|A_f(p)|}p\right)^2\left(\sum_{z<p\leq w}\frac{|A_f(p)|^4}p\right).
\]
Combining this with the assumptions (\ref{cond1}) and (\ref{cond3}) gives
\[
\sum_{z<p\leq w}\frac{|A_f(p)|}p\geq\frac1{\sqrt{C_4}}\sum_{z<p\leq w}\frac 1p-O_f\left(\frac1{\log z}\right). 
\]
Using the assumption (\ref{cond2}) it follows that
\begin{align}\label{bound_for_neg_fc}
\sum_{\substack{z<p\leq w\\A_f(p)<0}}\frac{|A_f(p)|}p\geq\frac1{2\sqrt{C_4}}\sum_{z<p\leq w}\frac 1p-O_f\left(\frac1{\log z}\right)
\end{align}
and a similar inequality holds for sum over the primes with $A(p)>0$.

By the Cauchy--Schwarz inequality we have
\[
\left(\sum_{\substack{z<p\leq w\\A_f(p)<0}}\frac{|A_f(p)|}p\right)^2\leq\left(\sum_{\substack{z<p\leq w\\A_f(p)<0}}\frac1p\right)\left(\sum_{z<p\leq w}\frac{|A_f(p)|^2}p\right)
\]
from which we deduce, using (\ref{cond1}) and (\ref{bound_for_neg_fc}),
\[
\sum_{\substack{z<p\leq w\\A_f(p)<0}}\frac1p\geq\left(\frac1{4C_4}+o(1)\right)\sum_{z<p\leq w}\frac 1p.
\]
Now the argument can be completed as in the proof of Theorem \ref{Main_theorem}. Indeed, the estimate above yields $m_g(X;T)\gg\log\log X$ uniformly in $0<T\leq X$. By Proposition \ref{Tenenbaum} this leads to (\ref{g-est}).

So to complete the proof it suffices to verify conditions (\ref{cond1})-(\ref{cond3}) unconditionally for self-dual $\mathrm{GL}_2$ and $\mathrm{GL}_3$ Hecke--Maass cusp forms. Let us start by establishing part (1) of the theorem. Let $\text{Sym}^2f$ be the symmetric square lift of $f$, which is a $\mathrm{GL}_3$ Hecke--Maass cusp form due to a result of Gelbart and Jacquet \cite{Gelbart-Jacquet1978}. The Fourier coefficients of $f$ and $\text{Sym}^2f$ at primes are connected by the relation $A_f(p)^2=A_{\text{Sym}^2f}(p)+1$ and so in particular
\[
A_f(p)^4=A_{\text{Sym}^2f}(p)^2+2A_{\text{Sym}^2f}(p)+1. 
\]
Using Lemma \ref{automorphicPNT} with $\pi=\pi'=\text{Sym}^2f$ we have 
\[
\sum_{p\leq x}A_{\text{Sym}^2f}(p)^2\log p=x+O\left(xe^{-c\sqrt{\log x}}\right).
\]
Applying the same lemma with $\pi=\text{Sym}^2f$ and $\pi'=1$ (the trivial representation of $\mathrm{GL}_1$) gives
\[
\sum_{p\leq x}A_{\text{Sym}^2f}(p)\log p\ll xe^{-c\sqrt{\log x}}. 
\]
Combining these observations and the usual prime number theorem gives
\begin{align*}
\sum_{p\leq x}A_f(p)^4\log p &=\sum_{p\leq x}A_{\text{Sym}^2f}(p)^2\log p+2\sum_{p\leq x}A_{\text{Sym}^2f}(p)\log p+\sum_{p\leq x}\log p \\
&=2x+O\left(xe^{-c\sqrt{\log x}}\right). 
\end{align*}
This gives (\ref{cond3}) with $C_4=2$ by partial summation. We conclude using the bound $\Delta_f(X)\gg(\log X)^{-1+1/C_4}$.

The proof of part (2) requires more elaborate arguments. Let $\Pi$ be a fixed self-dual unitary cuspidal automorphic representation of $\mathrm{GL}_3(\A_\Q)$ with normalised Fourier coefficients $A_\Pi(m):=A(m,1)$. As before, put
\[
\Delta_\Pi(X):=\prod_{\substack{p\leq X\\A_\Pi(p)=0}}\left(1-\frac1p\right). 
\]
We will use a result of Ramakrishnan \cite[Theorem A]{Ramakrishnan2014} saying that every self-dual cuspidal automorphic representation of $\mathrm{GL}_3(\A_\Q)$ is, up to a quadratic twist, an adjoint/symmetric square lift from $\mathrm{GL}_2$. So, there exists a non-dihedral cuspidal representation $\sigma$ of $\mathrm{GL}_2(\A_\Q)$ and a quadratic character $\nu$ (i.e. $\nu^2=1$) such that
\[
\Pi\simeq \text{Ad}(\sigma)\otimes\nu,
\]
where $\text{Ad}(\sigma):=\text{Sym}^2\sigma\otimes \omega_\sigma^{-1}$ with $\omega_\sigma$ being the central character of $\sigma$. 

We distinguish into cases depending on whether $\sigma$ is neither tetrahedral nor octahedral or not. Assume first that $\sigma$ is of cyclic or icosahedral type. For such representations we show the following. 

\begin{prop}\label{Pi-est}
Assume that $\Pi\simeq \mathrm{Ad}(\sigma)\otimes\nu$, where $\sigma$ is a cuspidal automorphic representation of $\mathrm{GL}_2(\A_\Q)$ that is either of cyclic or icosahedral type, and $\nu$ is a quadratic character. Then we have uniformly in $2\leq z<w$ that
\begin{enumerate}
\item \[
\sum_{z<p\leq w}\frac{|A_\Pi(p)|^2}p=\sum_{z<p\leq w}\frac 1p+O_\Pi\left(\frac1{\log z}\right); 
\]
\item \[
\sum_{z<p\leq w}\frac{A_\Pi(p)}p=O_\Pi\left(\frac1{\log z}\right);
\]
\item \[
\sum_{z<p\leq w}\frac{|A_\Pi(p)|^4}p=3\sum_{z<p\leq w}\frac 1p+O_\Pi\left(\frac1{\log z}\right).
\]
\end{enumerate}
\end{prop}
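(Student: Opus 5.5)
The plan is to reduce all three estimates to prime number theorems for Rankin--Selberg convolutions via Lemma \ref{automorphicPNT}, and then pass to sums over $z<p\leq w$ weighted by $1/p$ by partial summation. The partial summation step is exactly as in the derivation of (\ref{uniformRS}) in \cite[Lemma 5.6]{Mangerel2023}. Everything hinges on computing $A_\Pi(p)^2$ and $A_\Pi(p)^4$ as Fourier coefficients at $p$ of isobaric sums of cuspidal representations of small rank. Here $A_\Pi(p)$ is real, since $\Pi$ is self-dual.

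\textbf{Parts (1) and (2).} Since $\Pi$ is a cuspidal representation of $\mathrm{GL}_3$ and is self-dual, Lemma \ref{automorphicPNT}(1) with $\pi=\pi'=\Pi$ gives $\sum_{p\leq x}|A_\Pi(p)|^2\log p=x+O(xe^{-c\sqrt{\log x}})$, and partial summation yields (1). For (2), apply the lemma with $\pi=\Pi$ and $\pi'=1$ (the trivial representation of $\mathrm{GL}_1$, which is self-dual). We have $\Pi\not\simeq|\det|^{i\tau_0}$, since $\Pi$ is cuspidal on $\mathrm{GL}_3$. Hence $\sum_{p\leq x}A_\Pi(p)\log p\ll xe^{-c\sqrt{\log x}}$, and partial summation gives (2).

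\textbf{Part (3).} For unramified $p$, write $a=A_\sigma(p)$ and $\omega=\omega_\sigma(p)$, so that $A_\Pi(p)=\nu(p)(a^2\overline{\omega}-1)$. Then
\[
A_\Pi(p)^2=A_{\Pi\times\Pi}(p)=A_{\mathrm{Ad}\sigma\times\mathrm{Ad}\sigma}(p)
\]
because $\nu^2=1$. The Clebsch--Gordan decomposition gives $\mathrm{Ad}\sigma\boxtimes\mathrm{Ad}\sigma=1\boxplus\mathrm{Ad}\sigma\boxplus\mathrm{Sym}^4\sigma\otimes\omega_\sigma^{-2}$, so at good primes
\[
A_\Pi(p)^2=1+B(p)+C(p),
\]
where $B(p)=A_{\mathrm{Ad}\sigma}(p)$ and $C(p)=A_{\mathrm{Sym}^4\sigma\otimes\omega_\sigma^{-2}}(p)$. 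Squaring gives
\[
A_\Pi(p)^4=1+B^2+C^2+2B+2C+2BC.
\]
I would now invoke Kim--Shahidi \cite{Kim-Shahidi2002} and Kim's functoriality for $\mathrm{Sym}^4$: for $\sigma$ not dihedral, tetrahedral or octahedral (the icosahedral and generic cases), $\mathrm{Sym}^4\sigma\otimes\omega_\sigma^{-2}$ is a cuspidal representation of $\mathrm{GL}_5$. The cyclic case does not arise for cuspidal $\sigma$ beyond trivial adjustments, and dihedral $\sigma$ is excluded by Ramakrishnan's theorem. Both $\mathrm{Ad}\sigma$ and $\mathrm{Sym}^4\sigma\otimes\omega_\sigma^{-2}$ are self-dual, since they are orthogonal. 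Applying Lemma \ref{automorphicPNT} to each term:
\begin{enumerate}
\item the pairs $(B,1)$ and $(C,1)$ contribute $o$-terms with error $O(xe^{-c\sqrt{\log x}})$, or $O(x/\log x)$ in the second case;
\item the pairs $(B,B)$ and $(C,C)$ each contribute $x$;
\item the pair $(B,C)$ contributes a negligible amount, since the two representations are non-isomorphic, being of different ranks.
\end{enumerate}
This yields $\sum_{p\leq x}A_\Pi(p)^4\log p=3x+O(x/\log x)$. Partial summation then gives (3), and the error $O(1/\log z)$ is acceptable because the error $x/\log x$ is integrated against $1/(t\log t)$-type weights, giving $\int_z^\infty dt/(t\log^2 t)\ll 1/\log z$.

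\textbf{Main obstacle.} The hard step is the $\mathrm{GL}_5$ term, since $\max\{m,m'\}=5$ and Lemma \ref{automorphicPNT}(2) requires Hypothesis H. I would verify Hypothesis H for $\mathrm{Sym}^4\sigma$ from the known bounds toward Ramanujan for $\sigma$. Kim--Sarnak's $7/64$ implies the Satake parameters of $\mathrm{Sym}^4\sigma$ are $\ll p^{28/64}<p^{1/2}$, so $\sum_p(\log p)^2|A(p^\nu)|^2/p^\nu$ converges for $\nu\geq2$. Alternatively, one can cite Kim--Shahidi's result that Hypothesis H holds for these symmetric powers.

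A secondary concern is the ramified primes and those dividing the conductor of $\nu$. These form a finite set and only affect the $O_\Pi(1/\log z)$ constants.
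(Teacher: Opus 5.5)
Your route is the paper's route. Parts (1) and (2) come from Lemma \ref{automorphicPNT} plus partial summation. For (3) you use the identity $A_\rho(p)^2=1+A_\rho(p)+A_\tau(p)$ with $\rho=\mathrm{Ad}(\sigma)$ and $\tau=\mathrm{Sym}^4\sigma\otimes\omega_\sigma^{-2}$, square it, and apply Lemma \ref{automorphicPNT} to the six resulting prime sums, with $\tau$ cuspidal by Kim--Shahidi and the pair $(\tau,\rho)$ separated by rank. The algebra, the self-duality remarks and the partial summation with an $O(x/\log x)$ error are all fine.

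The one step that fails is your primary verification of Hypothesis H for $\tau$. From $\vartheta(2)\leq 7/64$ the Satake parameters of $\tau$ are at most $p^{28/64}=p^{7/16}$ in absolute value. The best pointwise bound is therefore $|A_\tau(p^\nu)|^2p^{-\nu}\ll_\nu p^{7\nu/8-\nu}=p^{-\nu/8}$. But $\sum_p(\log p)^2p^{-\nu/8}$ converges only for $\nu\geq 9$. In particular $\nu=2$ gives $p^{-1/4}$, which diverges. To handle $\nu=2$ pointwise you would need the parameters of $\tau$ to be $\ll p^{\theta}$ with $\theta<1/4$. Having them below $p^{1/2}$ only covers large $\nu$.

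So Hypothesis H for $\mathrm{Sym}^4\sigma$ is a genuine theorem and must be cited. This is exactly what the paper does, citing Kim \cite{Kim2006}. Your ``alternatively, cite the known result'' route is therefore the correct one, but with \cite{Kim2006} as the reference rather than Kim--Shahidi.
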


\begin{proof}
\noindent The first two bounds are direct consequences of Lemma \ref{automorphicPNT} and partial summation. For (3) we argue as follows. For simplicity, put $\rho:=\text{Ad}(\sigma)$ and $\tau:=\text{Sym}^4\sigma\otimes\omega_\sigma^{-2}$. By Kim's symmetric fourth power functoriality \cite{Kim2003}, $\tau$ is an automorphic representation for the group $\mathrm{GL}_5(\A_\Q)$. Furthermore, by the result of Kim and Shahidi \cite{Kim-Shahidi2002}, $\text{Sym}^4\sigma$ is cuspidal as we assumed $\sigma$ to be of cyclic or icosahedral type. 

Denote the Satake parameters of $\sigma$ at a prime $p$ as $\alpha_p$ and $\beta_p$. Then we have the relations
\[
A_\rho(p)=\alpha_p\beta_p^{-1}+\alpha_p^{-1}\beta_p+1
\]
and 
\[
A_\tau(p)=\alpha_p^2\beta_p^{-2}+\alpha_p\beta_p^{-1}+1+\alpha_p^{-1}\beta_p+\alpha_p^{-2}\beta_p^2. 
\]
Thus a direct calculation gives
\begin{align*}
A_\rho(p)^2&=\alpha_p^2\beta_p^{-2}+\alpha_p^{-2}\beta_p^2+2\alpha_p\beta_p^{-1}+2\alpha_p^{-1}\beta_p+3\\
&=A_\tau(p)+A_\rho(p)+1.
\end{align*}
Squaring yields
\[
A_\rho(p)^4=A_\tau(p)^2+A_\rho(p)^2+1+2A_\tau(p)A_\rho(p)+2A_\tau(p)+2A_\rho(p).
\]
Noting that $A_\Pi(p)=\nu(p)a_\rho(p)$ and $\nu(p)^2=1$ we have  
\begin{align}\label{fourtpowerexp}
&\sum_{p\leq x}A_\Pi(p)^4\log p\\
&=\sum_{p\leq x}A_\tau(p)^2\log p+\sum_{p\leq x}A_\rho(p)^2\log p+\sum_{p\leq x}\log p+2\sum_{p\leq x}A_\tau(p)A_\rho(p)\log p+2\sum_{p\leq x}A_\tau(p)\log p+2\sum_{p\leq x}A_\rho(p)\log p. \nonumber
\end{align}
Using Lemma \ref{automorphicPNT} we have the estimates
\[
\sum_{p\leq x}A_\rho(p)^2\log p=x+O_\Pi\left(xe^{-c\sqrt{\log x}}\right) \quad \text{and} \quad \sum_{p\leq x}A_\rho(p)\log p\ll xe^{-c\sqrt{\log x}}. 
\]
For $\tau$, which is $\mathrm{GL}_5$ representation, one needs Hypothesis H in order to apply part (2) of Lemma \ref{automorphicPNT}. Fortunately this is exactly one of the special higher degree cases in which this hypothesis is known \cite{Kim2006}. As $\tau$ is cuspidal and self-dual, we have 
\[
\sum_{p\leq x}A_\tau(p)^2\log p=x+O_\sigma\left(\frac x{\log x}\right)
\]
by using part (2) of Lemma \ref{automorphicPNT}. Likewise, applying Lemma \ref{automorphicPNT} to the pair $(\pi,\pi')=(\tau,\rho)$ we have, as $\tau$ and $\rho$ clearly cannot be twist equivalent simply because they are of different degree, 
\[
\sum_{p\leq x}A_\tau(p)A_\rho(p)\log p=O_\sigma\left(\frac x{\log x}\right).
\]
Similarly we have 
\[
\sum_{p\leq x}A_\tau(p)\log p=O_\sigma\left(\frac x{\log x}\right)
\]
and finally by the ordinary prime number theorem
\[
\sum_{p\leq x}\log p=x+O\left(xe^{-c\sqrt{\log x}}\right).
\]
Using these in (\ref{fourtpowerexp}) gives
\[
\sum_{p\leq x}|A_\Pi(p)|^4\log p=3x+O_\Pi\left(\frac x{\log x}\right),
\] 
from which (3) follows by partial summation. This completes the proof of the proposition. 
\end{proof}

\noindent Thus conditions (\ref{cond1})-(\ref{cond3}) hold for cyclic and icosahedral representations. Now the proof can be completed as before in the case that $\sigma$ is either of these types. 

In the opposite case ($\sigma$ is tetrahedral or octahedral) again (\ref{cond1}) and (\ref{cond2}) hold by Lemma \ref{automorphicPNT}. However, for the fourth moment we need to proceed differently as $\text{Sym}^4\sigma$ is not cuspidal. But in this case Kim and Shahidi \cite{Kim-Shahidi2002} have shown that $\sigma$ satisfies Generalised Ramanujan--Petersson Conjecture holds for all primes $p$. Thus using the estimate $|A_\Pi(p)|\leq 3$ we trivially have that $|A_\Pi(p)|^4\leq 9|A_\Pi(p)|^2$. Hence, using the second moment bound we deduce that  
\begin{align*}
\sum_{z<p\leq w}\frac{|A_\Pi(p)|^4}p&\leq 9\sum_{z<p\leq w}\frac{|A_\Pi(p)|^2}p\\
&\leq 9\sum_{z<p\leq w} \frac 1p+O_\Pi\left(\frac1{\log z}\right)
\end{align*}
uniformly in $2\leq z<w$. So (\ref{cond3}) holds also in this case. The proof is completed. \qed

\medskip

\begin{comment}
Given this we may argue as before i.e., we deduce the estimate
\[
\sum_{\substack{z<p\leq w\\ A_\Pi(p)<0}}\frac1p\geq\frac1{36}\sum_{z<p\leq w}\frac1p-O_\Pi\left(\frac1{\log z}\right),
\]
which then leads to a lower bound $m_g(X;T)\gg\log\log X$ using Lemma \ref{pretentious}. Given this, by Proposition \ref{Tenenbaum} gives
\[
\sum_{m\sim X}g(m)=o\left(\sum_{m\sim X}r(m)\right),
\]
as desired. The first estimate
\[
\sum_{m\sim X}r(m)\asymp X\Delta_\Pi(X)
\]
is a direct consequence of Proposition \ref{Wirsing2}.
\end{comment}

\noindent Finally, we deduce  Corollary \ref{Corollary}. Let $\mathcal S$ denote the set of points $x\sim X$ for which the estimate
\[
\mathcal N_f^\pm(x+H)-\mathcal N_f^\pm(x)\gg H\Delta_f(X)
\]
holds. By Theorem \ref{Main_theorem} the set $\mathcal S$ has cardinality $(1-o(1))X$. Choose a maximal $H$-separated set $x_1,...,x_J\in\mathcal S$. Maximality implies that the intervals $[x_j-H,x_j+H]$ cover the set $\mathcal S$ and consequently $J\gg X/H$. Set $I_j:=[x_j-H/4,x_j]$ for $j=1,...,J$. These intervals are pairwise disjoint and for $y\in I_j$ we have $(x_j,x_j+H/2]\subset (y,y+H]$. Therefore every positive or negative coefficient that is counted in the interval $(x_j,x_j+H/2]$ is also counted in the interval $(y,y+H]$. Applying Theorem \ref{Main_theorem} to the intervals $(x_j,x_j+H/2]$ yields the desired result. 

\section{Proofs of Theorems \ref{thirdMain_theorem} and \ref{fourthMain_theorem}}

\subsection{Proof of Theorem \ref{thirdMain_theorem}}
\noindent Let us set 
\[
\mathcal P_f:=\{p\in\mathbb P:\,A_f(p)\neq 0\}. 
 \]
As we assume Generalised Ramanujan--Petersson Conjecture, we have $|A_f(p)|\leq n$ and so 
\[
1_{\mathcal P_f}(p)\geq\frac{|A_f(p)|^2}{n^2}.
\]
Thus, using the uniform Rankin--Selberg estimate (\ref{uniformRS}) we have
\begin{align}\label{primeRS}
\sum_{\substack{z<p\leq w\\p\in\mathcal P_f}}\frac1p&\geq\frac1{n^2}\sum_{z<p\leq w}\frac{|A_f(p)|^2}p \nonumber\\
&\geq\frac1{n^2}\sum_{z<p\leq w}\frac1p+O_f\left(\frac1{\log z}\right).
\end{align}
Using a method of Landau, Qu \cite[Theorem 1.1]{Qu2010} proved that the sequence $\{A_f(m)\}_{m=1}^\infty$ has infinitely many sign changes, and so in particular some coefficient $A_f(r)$ is negative. Define the multiplicative function
\[
\ell(m):=\mu(m)^2 1_{p|m\Rightarrow p\in\mathcal P_f,\,(p,r)=1},
\]
i.e., $\ell(m)$ is the characteristic function of squarefree integers composed of primes $p$ coprime to $r$ at which $A_f(p)\neq 0$. 

We wish to apply Wirsing's theorem (Proposition \ref{Wirsing2}). Towards this we define a multiplicative function $h$ by 
\begin{align*}
h(p):=\begin{cases}
\frac{|A_f(p)|^2}{n^2} & \text{if }(p,r)=1 \\
0 & \text{if }p|r
\end{cases}
\qquad \text{and} \qquad h(p^\nu):=0\quad\text{for }\nu\geq 2.
\end{align*}
As we assume Generalised Ramanujan--Petersson Conjecture, $0\leq h(p)\leq 1$ and so $0\leq h(m)\leq \ell(m)$ for every $m$. Furthermore, using Lemma \ref{automorphicPNT} and partial summation we have
\begin{align*}
\sum_{p\leq x}\frac{h(p)\log p}p &=\frac1{n^2}\sum_{\substack{p\leq x\\ (p,r)=1}}\frac{|A_f(p)|^2\log p}p\\
&=\frac1{n^2}\sum_{p\leq x}\frac{|A_f(p)|^2\log p}p+O_r(1) \\
&=\left(\frac 1{n^2}+o(1)\right)\log x.
\end{align*}
Now Proposition \ref{Wirsing2} gives the lower bound
\begin{align}\label{Wirsing_lb}
\sum_{m\leq x}\ell(m)\geq \sum_{m\leq x}h(m)\gg x\prod_{p\leq x}\left(1-\frac1p\right)\sum_{j=0}^\infty\frac{h(p^j)}{p^j}\gg x(\log x)^{1/n^2-1}. 
\end{align}
Consider the set
\[
\mathcal A:=\{m\in\N,\,\text{squarefree},\,(m,r)=1,\,p|m\Rightarrow p\in\mathcal P_f\}.
\]
Next we observe that for $m\in\mathcal A$ we have $A_f(m)\neq 0$ and $A_f(mr)=A_f(m)A_f(r)$. At least half of the numbers $\mathcal A\cap[1,x/r]$ are of the same sign (wlog positive). By the preceding observations there are also at least this number of negative coefficients in the larger set $\mathcal A\cap[1,x]$ as $\text{sgn}(A_f(m)A_f(mr))=\text{sgn}(A_f(r))<0$. As the cardinality of the set $\mathcal A\cap[1,x]$ is $\gg x(\log x)^{1/n^2-1}$ by (\ref{Wirsing_lb}), this concludes the proof. \qed 

\subsection{Proof of Theorem \ref{fourthMain_theorem} }

\noindent We first prove a general result and then specialise to $\mathrm{GL}_n$ with $n\leq 3$. Let $a:\N\longrightarrow\Z$ be a multiplicative function and define a multiplicative set
\[
\mathcal M_a:=\left\{m\in\N:\,a(m)\neq 0\right\}.
\]
Suppose that there exists an absolute constant $\alpha>0$ so that 
\begin{align}\label{cond4}
\sum_{\substack{z<p\leq w\\p\in\mathcal M_a}}\frac1p>\alpha\sum_{z<p\leq w}\frac1p-O\left(\frac1{\log z}\right)
\end{align}
uniformly in $2\leq z<w$. Then we claim that 
\begin{align}\label{intersection_lb}
\#\left\{\mathcal M_a\cap[X,2X]\right\}\gg_\alpha X\Delta_a(X).
\end{align}
Indeed, this is a fairly direct consequence of \cite[Corollary 1.2 (i)]{Matomaki-Radziwill2020}, which says that for a given $2\leq h_0\leq X$ an interval $[x,x+h_0\Delta_a(X)^{-1}]$ of length $h_0\Delta_a(X)^{-1}$ contains $\gg_\alpha h_0$ elements $m\in\N$ such that $a(m)\neq 0$ for all but $\ll_{\alpha,\eps} Xh_0^{-1/2+\eps}$ of $x\sim X$. Choosing $h_0=C_\alpha$ to be a sufficiently large fixed constant depending on $\alpha$ it follows that for a positive proportion of $x\sim X$ we have that 
\[
\#\left(\mathcal M_a\cap[x,x+C_\alpha\Delta_a(X)^{-1}]\right)\gg_\alpha 1.
\]
Summing over $x\sim X$ and noting that every integer is counted at most $O(\Delta_a(X)^{-1})$ times gives (\ref{intersection_lb}). 

Let us now suppose that $a(m)$ are either Hecke eigenvalues of a $\mathrm{GL}_2$ or $\mathrm{GL}_3$ Hecke--Maass cusp form $f$. Now choose $r\geq 2$ for which $A_f(r)<0$, which exists by a result of Qu \cite{Qu2010}. We consider the set 
\[
\mathcal M_{f,r}:=\left\{m\in\N:\, A_f(m)\neq 0,\,(m,r)=1\right\}.
\]
Note that condition (\ref{cond4}) holds with $\mathcal M_a$ replaced by $\mathcal M_{f,r}$ as removing finite number of primes dividing $r$ causes an error $\ll 1/z$. Thus we have
\[
\#\left\{m\sim X:\,m\in\mathcal M_{f,r}\right\}\gg_{f,r} X\Delta_f(X). 
\]
For every $m\in\mathcal M_{f,r}$ we have $A_f(mr)=A_f(m)A_f(r)$ and so $A_f(m)$ and $A_f(mr)$ have opposite signs. Note that for $m\in\mathcal M_{f,r}$ with $m\leq X/r$ the pairs $(m,mr)$ are disjoint as $(m,r)=1$. Thus it follows that 
\[
\mathcal N_f^\pm(X)\gg_f X\Delta_f(X). 
\]
From the proof of Theorem \ref{secondMain_theorem} (see (\ref{MR_cond2}) and comments following it) we have that under the condition (\ref{cond4}) it follows that $\Delta_f(X)\gg(\log X)^{\alpha-1}$ and the condition holds with $\alpha=1/3$ for a self-dual $\mathrm{GL}_3$ Maass cusp form $f$ and $\alpha=1/2$ for $\mathrm{GL}_2$ Maass cusp forms, as we have seen. This completes the proof. \qed 

\section{Proofs of Theorems \ref{fifthMain_theorem} and \ref{sixthMain_theorem}}

\subsection{Proof of Theorem \ref{fifthMain_theorem}}
\noindent (1) Let us define the set
\[
\mathcal U:=\left\{x\sim X:\, \sum_{x\leq m\leq x+H}|A(m)|^2<\frac{r_fH_f(1)\cdot H}2\right\}.
\]
Note that for $x\in\mathcal U$ we have 
\[
\frac{r_fH_f(1)\cdot H}2<r_fH_f(1)\cdot H-\sum_{x\leq m\leq x+H}|A(m)|^2
\]
and so from part (1) of Lemma \ref{RS_average} and Chebyshev's inequality it immediately follows that 
\[
|\mathcal U|\ll\frac1{H^2}\int\limits_X^{2X}\left|\sum_{x\leq m\leq x+H}|A(m)|^2-r_fH_f(1)\cdot H\right|^2\,\mathrm d x\ll_\eps\frac{X^{1+2\vartheta+\eps}}{H}.
\]
Let us fix an arbitrarily small $\eps>0$. For $x\in\mathcal U^c$ we clearly have
\[
\sum_{x\leq m\leq x+H}|A(m)|\gg_\eps HX^{-\vartheta-\eps}
\]
using the trivial estimate $\max_{x\leq m\leq x+H}|A(m)|\ll_\eps X^{\vartheta+\eps}$.

Define an another set
\[
\mathcal V:=\left\{x\sim X:\,\left|\sum_{x\leq m\leq x+H}A(m)\right|>r_fH_f(1)\cdot HX^{-\vartheta-2\eps}\right\}.
\]
Then by part (2) of Lemma \ref{RS_average} and a simple application of Chebyshev's inequality we have
\[
|\mathcal V|\ll\frac{X^{2\vartheta+4\eps}}{H^2}\int\limits_X^{2X}\left|\sum_{x\leq m\leq x+H}A(m)\right|^2\,\mathrm d x\ll\frac{X^{1+2\vartheta+\eps}}H.
\]
Thus for $x\in \mathcal U^c\cap\mathcal V^c$ we have 
\[
\sum_{x\leq m\leq x+H}|A(m)|\pm\sum_{x\leq m\leq x+H}A(m)\gg_\eps HX^{-\vartheta-\eps},
\]
and consequently
\[
\sum_{\substack{x\leq m\leq x+H\\
A(m)\lessgtr 0}}A(m)=\frac12\left(\sum_{x\leq m\leq x+H}|A(m)|\pm\sum_{x\leq m\leq x+H}A(m)\right)\gg_\eps HX^{-\vartheta-\eps},
\]
which gives the claim in view of the pointwise estimate $|A(m)|\ll_\eps m^{\vartheta+\eps}$. The exceptional set for which this does not hold has size $\ll_\eps X^{1+2\vartheta+\eps}/H$ by the bounds on the sizes on $\mathcal U$ and $\mathcal V$. This completes the proof of the first part. 

(2) By Lemma \ref{GLH_conseq} we have the estimates
\[
\sum_{x\leq m\leq x+H}A(m)\ll_\eps x^{1/2+\eps} \qquad \text{and} \qquad \sum_{x\leq m\leq x+H}|A(m)|^2=r_fH_f(1)\cdot H+O_{f,\eps}\left(X^{1/2+\vartheta+\eps}\right).
\]
As $H\geq X^{1/2+\vartheta+\eta}$ (for some fixed $\eta>0$), it follows that 
\[
\sum_{x\leq m\leq x+H}|A(m)|^2\asymp H,
\]
and consequently 
\[
\sum_{x\leq m\leq x+H}|A(m)|\gg_\eps HX^{-\vartheta-\eps},
\]
using the pointwise bound $|A(m)|\ll X^{\vartheta+\eps}$. 

Furthermore, in the same range of $H$ we clearly have 
\[
\sum_{x\leq m\leq x+H}A(m)=o\left(\sum_{x\leq m\leq x+H}|A(m)|\right).
\]
Combining these estimates we deduce that 
\[
\sum_{x\leq m\leq x+H}|A(m)|\pm\sum_{x\leq m\leq x+H}A(m)\gg_\eps HX^{-\vartheta-\eps},
\]
and the proof can be concluded as in part (1).  \qed

\begin{rem}
One may wonder whether the results of this section can be improved using a bilinear structure as in \cite{Jaasaari2023}. Unfortunately, it seems that such a strategy does not yield an improvement without assuming the full Generalised Ramanujan--Petersson Conjecture. 
\end{rem}

\subsection{Proof of Theorem \ref{sixthMain_theorem}}
\noindent Fix $q>2$. By Shiu's bound (Lemma \ref{Shiu}) we have 
\begin{align}\label{Shiu_conseq2}
\sum_{x\leq m\leq x+H}|A(m)|^q\ll\frac H{\log x}\exp\left(\sum_{p\leq x}\frac{|A(p)|^q}p\right).
\end{align}
Under Generalised Ramanujan--Petersson Conjecture we have $|A(p)|^{q}\leq n^{q-2}|A(p)|^2$. Together with the Rankin--Selberg asymptotics (\ref{uniformRS}) this gives
\[
\sum_{p\leq x}\frac{|A(p)|^q}p\leq n^{q-2}\sum_{p\leq x}\frac{|A(p)|^2}p=n^{q-2}\log\log x+O(1). 
\]
Plugging this into (\ref{Shiu_conseq2}) leads to 
\begin{align}\label{Shiu-conseq}
\sum_{x\leq m\leq x+H}|A(m)|^q\ll H(\log x)^{n^{q-2}-1}.
\end{align}
By simple interpolation we also have the estimate
\[
\sum_{x\leq m\leq x+H}|A(m)|^2\leq\left(\sum_{x\leq m\leq x+H}|A(m)|\right)^{\frac{q-2}{q-1}}\left(\sum_{x\leq m\leq x+H}|A(m)|^q\right)^{\frac1{q-1}}.
\] 
As seen in the proof of part (2) of the previous theorem, thanks to the assumption\footnote{Recall that we assume Generalised Ramanujan--Petersson Conjecture here.} $H\geq x^{1/2+\eta}$ we have that the left-hand side of the interpolation inequality is $\asymp H$. Thus, recalling (\ref{Shiu-conseq}) we deduce the lower bound
\[
\sum_{x\leq m\leq x+H}|A(m)|\gg H(\log x)^{-(n^{q-2}-1)/(q-2)}.
\]
We also recall from proof of part (2) in the previous theorem that 
\[
\sum_{x\leq m\leq x+H}A(m)=o\left(\sum_{x\leq m\leq x+H}|A(m)|\right)
\]
for $H\geq x^{1/2+\eta}$. 

Thus
\[
\sum_{x\leq m\leq x+H}|A(m)|\pm\sum_{x\leq m\leq x+H}A(m)\gg H(\log x)^{-(n^{q-2}-1)/(q-2)}.
\]
By the Cauchy--Schwarz inequality we have
\[
H^2(\log x)^{-2(n^{q-2}-1)/(q-2)}\ll\left(\mathcal N_f^\pm(x+H)-\mathcal N_f^\pm(x)\right)\sum_{x\leq m\leq x+H}|A(m)|^2.
\]
This implies the claim by recalling the estimate
\[
\sum_{x\leq m\leq x+H}|A(m)|^2\asymp H
\]
for $H\geq x^{1/2+\eta}$ and choosing $q$ to be arbitrarily close to $2$ as well as noting that 
\[
\lim_{q\rightarrow 2}\frac{n^{q-2}-1}{q-2}=\log n.
\]    
\noindent The proof is completed. \qed

\bibliography{Fourier_coefficients_same_sign}
\bibliographystyle{plain}
\end{document}